\numberwithin{equation}{section}
\newtheorem{thm}{Theorem}[section] 
\newtheorem{prop}{Proposition}[section]
\newtheorem{lem}{Lemma}[section]
\theoremstyle{definition} 
\theoremstyle{remark} 
\newtheorem{rem}{Remark\rm}[section]
\title{Coincidence sets in quasilinear elliptic problems\\
of monostable type
\thanks{2010 \textit{Mathematics Subject Classification}. 
Primary 35J92,\ Secondary 35R35}}
\author{Shingo Takeuchi 
\thanks{This work was supported by KAKENHI (No. 20740094).}\\ 
Department of General Education, Kogakuin University\\
2665-1 Nakano, Hachioji, Tokyo  192-0015, 
JAPAN\\
E-mail: shingo@cc.kogakuin.ac.jp}
\date{}
\newcommand{\dist}{\operatorname{dist}}
\newcommand{\divg}{\operatorname{div}}
\newcommand{\ep}{\varepsilon}
\newcommand{\ol}{\overline}
\newcommand{\ul}{\underline}
\newcommand{\vp}{\varphi}
\newcommand{\Om}{\Omega}
\begin{document}
	
\maketitle 
     
\begin{abstract}
This paper concerns the formation of a coincidence set
for the positive solution of
the boundary value problem: $-\ep \Delta_p u=
 u^{q-1}f(a(x)-u)$ in $\Om$
with $u=0$ on $\partial \Om$,
where $\ep$ is a positive parameter, 
 $\Delta_p u=\divg(|\nabla u|^{p-2}\nabla u),\ 1<q \le p<\infty$, 
$f(s) \sim |s|^{\theta-1}s\ (s \to 0)$ for some $\theta>0$ and 
$a(x)$ is a positive smooth function
satisfying $\Delta_p a=0$ in $\Om$ with 
$\inf_\Om |\nabla a|>0$.
It is proved in this paper that 
if $0<\theta<1$
the coincidence set
$\mathcal{O}_\ep=\{x \in \Om:u_\ep(x)=a(x)\}$ 
has a positive measure and 
converges to $\Om$
with order $O(\ep^{1/p})$
as $\ep \to 0$.
Moreover, it is also shown that 
if $\theta \ge 1$, then $\mathcal{O}_\ep$ is empty for any $\ep>0$.
The proofs rely on comparison theorems and 
an energy method for obtaining local comparison functions.
\end{abstract}

 \section{Introduction} \setcounter{equation}{0}

 Let $\Om$ be a bounded domain in $\mathbb{R}^N\ (N \ge 2)$ with smooth
 boundary $\partial \Om$, and we consider the boundary value problem 
 of quasilinear elliptic equations of monostable type:
 \begin{equation}
  \label{eq:p}
  \begin{cases}
   -\ep \Delta_p u =u^{q-1}f(a(x)-u)
   \quad & \mbox{in}\ \Om,\\
   u \ge 0,\ u \not\equiv 0 & \mbox{in}\ \Om,\\
   u=0 & \mbox{on}\ \partial \Om,
  \end{cases}
 \end{equation}
 where $\ep$ is a positive parameter,
 $\Delta_p u$ denotes the $p$-Laplacian $\divg(\nabla_p u)$  
 with the $p$-gradient $\nabla_p u=|\nabla u|^{p-2}\nabla u$,
 $1<q \le p<\infty$, 
 $a:\Om \to \mathbb{R}$ is a 
 positive and smooth function 
 and 
 $f$ is a function satisfying the following
 conditions.

 (F1)\ $f \in C(\mathbb{R}) \cap C^1(\mathbb{R} \setminus \{0\})$ and $f(0)=0$.

 (F2)\ $f$ is strictly increasing on $\mathbb{R}$.

 (F3)\ There exists $\theta>0$ such that $\lim_{s \to 0}
 \frac{f(s)}{|s|^{\theta-1}s}=C$ for some $C>0$.

 By a solution of \eqref{eq:p} we mean a function $u \in 
 W^{1,p}_0(\Om)
 \cap L^{\infty}(\Om)$ satisfying \eqref{eq:p}
 (for details, see Section 2).
 Applying the theorem of D\'{i}az and Sa\'{a} \cite{DS}
 and the regularity result of Lieberman \cite{L},
 we see that if $\ep<\ep_a$ then 
  \eqref{eq:p} admits a unique positive solution 
  $u_{\ep} \in C^{1,\alpha}(\ol{\Om})$ for some 
  $\alpha \in (0,1)$;
  if $\ep \ge \ep_a$ then \eqref{eq:p} has no
  solution. 
  Here, $\ep_a=\infty$ if $p>q$ and $\ep_a=1/\lambda_{f(a)}$ 
  if $p=q$, where
 $\lambda_{f(a)}$ denotes the first eigenvalue of the definite weight
 eigenvalue problem
 \begin{equation*}
  \begin{cases}
   -\Delta_p u =\lambda f(a(x))|u|^{p-2}u 
   \quad & \mbox{in}\ \Om,\\
   u=0 & \mbox{on}\ \partial \Om,
  \end{cases}
 \end{equation*}
 and it can be characterized by 
 \begin{equation*}
  \lambda_{f(a)}=\inf_{u \in W^{1,p}_0(\Om),\ \neq 0}
   \frac{\displaystyle  \int_{\Om} |
   \nabla u(x)|^p\,dx}{\displaystyle \int_{\Om} f(a(x))
   |u(x)|^p\,dx}.
 \end{equation*}
 
 We define the \textit{coincidence set} of the positive solution $u_\ep$
 of \eqref{eq:p} with $a(x)$ as
 \begin{equation*}
  \mathcal{O}_\ep=\{x \in \Om:u_\ep(x)=a(x)\}.
 \end{equation*}

 In case $a(x)$ is constant, 
 problem \eqref{eq:p}
 has been already studied by several 
 authors. 
 Let $a(x) \equiv 1$ and $p=q>2$.
 Then, Guedda and V\'{e}ron \cite{GV} for $N=1$ and 
 Kamin and V\'{e}ron \cite{KV} for $N \ge 2$ 
 established that there exists
 a non-empty coincidence set $\mathcal{O}_\ep$ (or a \textit{flat core}, because 
 the graph of $u_\ep$ is flat on $\mathcal{O}_\ep$) 
 for $\ep$ small enough
 (when $\Om$ is a ball and $f(s)=s$, Kichenassamy and Smoller \cite{KS}
 had obtained the positive radial solution 
 with a flat core).
 They and 
 Garc\'{i}a-Meli\'{a}n and Sabina de Lis \cite{GS} 
 proved that if $0<\theta<p-1$, 
 then the flat core 
 has a positive measure for small $\ep \in (0,f(a)/\lambda_{f(a)})$ 
 and it converges to $\Om$ as $\dist(x,\mathcal{O}_\ep) \sim \ep^{1/p}\
 (\ep \to 0)$ for any $x \in \partial \Om$;
 while if $\theta \ge p-1$, then the flat core is empty. 
 These earlier results \cite{GS,GV,KV,KS} 
 are substantially sharpened by
 Guo \cite{Gu}. 
 Moreover, even if $a(x)$ is constant on a plural subdomain of $\Om$,
 there exists a flat core in each subdomain (see \cite{T1}). 
 General references for coincidence set are given in
 the monographs \cite{D} of D\'{i}az and \cite{PS} of
 Pucci and Serrin.

 In this paper we shall investigate
 the case where $a(x)$ is variable.
 It is heuristic that if the coincidence set 
 $\mathcal{O}_\ep$ has an interior point, then $a(x)$ has to satisfy
 $\Delta_p a=0$ on its neighborhood. 
 Inversely, we shall 
 assume $a(x)$ to be $p$-harmonic:
 $\Delta_p a=0$ in $\Om$,
and hence $a(x)$ satisfies the
equation of \eqref{eq:p}. 
Then, our major finding is that the $p$-harmonicity of $a(x)$
 is also a sufficient condition for an appearance of 
 coincidence set. 

Before stating the result, 
 we give precise conditions to $a(x)$:
 
 (A1) $\inf_{x \in \Om}a(x)>0$,
 
 (A2) $a \in C^{1,\alpha}(\ol{\Om})$
for some $\alpha \in (0,1)$ and $\Delta_p a=0$ in $\Om$, and 
%
%

 (A3) $\inf_{x \in \Om}|\nabla a(x)|>0$.

\noindent 
We notice that by DiBenedetto \cite{Di} and Tolksdorf \cite{To}, 
(A2) follows from, e.g.,  

(A2') there exists a domain 
      $\Om' \supset \ol{\Om}$ such that 
	$a \in W^{1,p}_{\rm loc}(\Om')$
	and $\Delta_pa=0$ in $\Om'$.
	
The following theorem suggests that with regard to the coincidence set of
 positive solution, it is unnecessary to assume
 $a(x)$ to be constant as in the past studies.
  
\begin{thm}
 \label{thm:main}
 Assume {\rm (A1)}, {\rm (A2)}
 and {\rm (A3)}. Let $0<\theta<1$.
 Then, there exist $L>0$ and $\ep_0 \in (0,\ep_a)$ 
 such that 
 for each $\ep \in (0,\ep_0)$
 the solution $u_{\ep}$ of \eqref{eq:p} satisfies
 $$u_{\ep}(x)=a(x)\quad \mbox{if}\ \dist(x,\partial \Om) \ge
	L{\ep}^{1/p}.$$
\end{thm}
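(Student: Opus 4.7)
My plan is to sandwich $u_\ep$ between $a$ from above and a local sub-solution coinciding with $a$ at the centre of each sufficiently interior ball. Since $\Delta_p a=0$ and $f(0)=0$, the function $a$ satisfies $-\ep\Delta_p a = a^{q-1}f(a-a)$ in $\Om$ with $a>0=u_\ep$ on $\partial\Om$; the D\'{\i}az--Sa\'{a} type comparison principle (already invoked for the uniqueness of $u_\ep$ in the introduction) then yields $u_\ep\le a$ throughout $\Om$. It suffices therefore to show that $u_\ep(x_0)\ge a(x_0)$ at every $x_0\in\Om$ with $\dist(x_0,\partial\Om)\ge L\ep^{1/p}$.

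\textbf{Construction of the local sub-solution.}
Set $R=L\ep^{1/p}$, so that $B_R(x_0)\subset\Om$, and seek
$\underline u_\ep(x)=a(x)-\Psi_\ep(|x-x_0|)$
with $\Psi_\ep$ continuous, non-decreasing on $[0,R]$, $\Psi_\ep(0)=0$, and $\Psi_\ep(R)\ge\sup_\Om a$, so that $\underline u_\ep(x_0)=a(x_0)$ and $\underline u_\ep\le0\le u_\ep$ on $\partial B_R(x_0)$. Using $\divg A(\nabla a)=0$ with $A(\xi):=|\xi|^{p-2}\xi$, the sub-solution inequality $-\ep\Delta_p\underline u_\ep\le\underline u_\ep^{q-1}f(\Psi_\ep)$ on $B_R(x_0)$ is equivalent to
\[
 -\ep\,\divg\bigl[A(\nabla a-\nabla\Psi_\ep)-A(\nabla a)\bigr]
 \le (a-\Psi_\ep)^{q-1}f(\Psi_\ep).
\]
By (A3) and the $C^1$-smoothness of $A$ away from the origin,
$A(\nabla a-\nabla\Psi_\ep)-A(\nabla a) = -DA(\nabla a)\nabla\Psi_\ep + O(|\nabla\Psi_\ep|^{\min\{p-1,2\}})$
with $DA(\nabla a)$ uniformly positive definite, so the leading-order requirement is a semilinear dead-core inequality $\ep L_a\Psi_\ep \le c_a\Psi_\ep^\theta$ with $L_a$ second-order uniformly elliptic and $c_a>0$. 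For $0<\theta<1$ this admits an explicit radial profile of the form
$\Psi_\ep(\rho)=K\ep^{-1/(1-\theta)}(\rho-L_0\ep^{1/p})_+^{2/(1-\theta)}$,
with $K$, $L_0$, and $L$ tuned so that $\Psi_\ep(R)\ge\sup_\Om a$ and a fixed margin remains in the leading inequality. Once $\underline u_\ep$ is a bona fide sub-solution, the weak comparison principle---applicable because $q\le p$ together with the strict monotonicity of $f$ makes $u\mapsto u^{q-1}f(a-u)/u^{p-1}$ strictly decreasing on $(0,a]$, which is the D\'{\i}az--Sa\'{a} hypothesis---gives $\underline u_\ep\le u_\ep$ in $B_R(x_0)$, hence $u_\ep(x_0)\ge a(x_0)$.

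\textbf{Main obstacle.}
The substantive difficulty is verifying that the profile above really satisfies the full (not just the leading) sub-solution inequality. The nonlinear remainder in $A(\nabla a-\nabla\Psi_\ep)-A(\nabla a)+DA(\nabla a)\nabla\Psi_\ep$ and the curvature corrections $\nabla a(x)-\nabla a(x_0)=O(\ep^{1/p})$ across $B_R(x_0)$ must be absorbed by the margin left in the dead-core inequality. This is exactly what the ``energy method for local comparison functions'' alluded to in the abstract achieves: testing the inequality against an appropriate cut-off power of $\Psi_\ep$ on annular shells and using the integrability of $\Psi_\ep^\theta$ against $|\nabla\Psi_\ep|^p$ closes the estimate precisely when $0<\theta<1$, and fails at $\theta=1$, consistent with the non-existence half of the author's dichotomy announced in the abstract.
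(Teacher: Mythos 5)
Your overall scheme (touch $a$ at the centre of a small interior ball with a dead-core comparison function and conclude by comparison) is in the spirit of the paper, but two steps in your construction genuinely break down. First, you skip the preliminary localization that the paper establishes before any dead-core argument (its Proposition \ref{lem:convergence}): via sub/supersolutions built from rescaled first eigenfunctions one first gets $a-\delta\le u_\ep\le a$ on $\Om_{K\ep^{1/p}}$ for a \emph{small but fixed} $\delta$. Because you omit this, your barrier must descend from $a(x_0)$ to below $0$ across a shell of width $O(\ep^{1/p})$, so $|\nabla\Psi_\ep|$ blows up there (of order $\ep^{-1/p}$ up to powers of $\ep$). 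Your linearization $A(\nabla a-\nabla\Psi_\ep)-A(\nabla a)=-DA(\nabla a)\nabla\Psi_\ep+O(|\nabla\Psi_\ep|^{\min\{p-1,2\}})$ is valid only while $|\nabla\Psi_\ep|$ is controlled by $|\nabla a|$; in the steep shell the operator is instead governed by the factor $|\nabla\Psi_\ep|^{p-2}$, i.e.\ precisely the degenerate regime of Section 5, where the dead-core exponent threshold is $\theta<p-1$ rather than $\theta<1$. So the "leading-order'' reduction to a uniformly elliptic inequality with margin is unjustified exactly where the estimate is hardest. (For $1<p<2$ there is a further inconsistency: with your profile, $\Psi_\ep(L\ep^{1/p})=K(L-L_0)^{2/(1-\theta)}\ep^{(2-p)/(p(1-\theta))}\to0$, so the requirement $\Psi_\ep(R)\ge\sup_\Om a$ cannot hold for small $\ep$ with fixed $K,L$.)

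Second, your passage from $(a-\Psi_\ep)^{q-1}f(\Psi_\ep)$ to $c_a\Psi_\ep^{\theta}$ discards the factor $(a-\Psi_\ep)^{q-1}$, which tends to $0$ as $\Psi_\ep\uparrow a$ and is undefined or of the wrong sign where $a-\Psi_\ep<0$; since in that region $-\ep\Delta_p(a-\Psi_\ep)\approx\ep\Delta_p\Psi_\ep>0$ and of size $O(1)$, the subsolution inequality fails near and beyond the zero level of your barrier whenever $q>1$. The comparison step is also not available as invoked: $u\mapsto u^{q-1}f(a(x)-u)$ is not monotone in $u$, so the ordinary weak comparison principle does not apply, and the D\'iaz--Sa\'a device concerns positive solutions of the Dirichlet problem on $\Om$, not a sign-changing local barrier on a ball on whose boundary $u_\ep>0$. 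The paper circumvents both difficulties by working with $v=a-u_\ep$: after Proposition \ref{lem:convergence} one has $0\le v\le\delta$ on $\Om_{K\ep^{1/p}}$ and $-\ep\divg\Phi_p(\nabla v,\nabla a)+\Lambda_1 v^{\theta}\le 0$, an inequality for a \emph{monotone} operator, so the elementary comparison of Proposition \ref{prop:comparison} applies against the solution $w$ of the auxiliary problem \eqref{eq:lcf} with the small boundary datum $\delta$; the dead core of $w$ covering $x_0$ is then obtained not from an explicit profile but from the D\'iaz--V\'eron energy method with constants uniform in $\ep,\delta,x_0$ (Proposition \ref{prop:lcf}, resting on the uniform $C^{1,\alpha}$ bound of Lemma \ref{lem:w_x}). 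If you wish to keep an explicit-barrier route, you would still need the Proposition \ref{lem:convergence} step first, so that the barrier only has to span a small fixed height $\delta$ with bounded gradients; carrying that out essentially reconstructs the paper's auxiliary problem.
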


The corresponding theorem for $p=2$ has been 
already proved in the author's paper
\cite{T2}. As mentioned above,
the condition $0<\theta<p-1$ seems to be
valid as a modification to the case $1<p<\infty$,
while the condition $0<\theta<1$ in the theorem
is same as that in case $p=2$.
However, this is natural because the principal
 part of equation of \eqref{eq:p} is neither degenerate nor
 singular in $\mathcal{O}_\ep$ when $a(x)$ satisfies the 
non-degeneracy condition (A3).
 
The condition $0<\theta<1$ in Theorem \ref{thm:main}
 is optimal in the following sense.

\begin{thm}
 \label{thm:main2}
 Assume $a(x)$ to be same in Theorem {\rm \ref{thm:main}}.
 Let $\theta \ge 1$.
 Then, for every $\ep \in (0,\ep_a)$, $u_\ep<a$ in $\Om$, 
and hence $\mathcal{O}_\ep =\emptyset$.
\end{thm}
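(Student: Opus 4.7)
The plan is to establish the strict inequality $u_\ep<a$ by combining a global comparison step with a local linearization near any would-be coincidence point; assumption {\rm (A3)} makes the $p$-Laplacian non-degenerate there, enabling a linear strong minimum principle in precisely the regime $\theta\ge 1$. First I verify the one-sided bound $u_\ep\le a$ in $\Om$: since $a$ is $p$-harmonic by {\rm (A2)} and $f(0)=0$, the function $a$ itself satisfies the PDE of \eqref{eq:p} in $\Om$, while $u_\ep=0<a$ on $\partial\Om$. The map $t\mapsto \ep^{-1}t^{q-p}f(a(x)-t)$ is strictly decreasing on $(0,\infty)$ because $q\le p$ and $f$ is strictly increasing, so the sub- and supersolution comparison of D\'{\i}az--Sa\'{a} yields $u_\ep\le a$ throughout $\Om$. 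Equivalently, testing the difference of the two equations against $(u_\ep-a)^+$ and using the strict monotonicity of $\xi\mapsto|\xi|^{p-2}\xi$ together with $f(a-u_\ep)<0$ on $\{u_\ep>a\}$ forces this set to have zero measure.

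Suppose now, for contradiction, that $\mathcal{O}_\ep\ne\emptyset$, and fix $x_0\in\Om$ with $u_\ep(x_0)=a(x_0)$. Set $w:=a-u_\ep$, so that $w\in C^{1,\alpha}(\ol{\Om})$, $w\ge 0$, $w(x_0)=0$, and since $x_0$ is an interior minimum, $\nabla u_\ep(x_0)=\nabla a(x_0)\ne 0$ by {\rm (A3)}. By $C^1$-continuity there is a ball $B_\rho(x_0)\Subset\Om$ on which $|(1-s)\nabla u_\ep+s\nabla a|$ remains bounded above and below by positive constants uniformly in $s\in[0,1]$. Writing $F(\xi)=|\xi|^{p-2}\xi$ and subtracting $-\ep\,\divg F(\nabla a)=0$ from \eqref{eq:p}, one obtains weakly on $B_\rho(x_0)$ the identity
\begin{equation*}
\ep\,\divg\bigl(A(x)\nabla w\bigr)=u_\ep^{q-1}f(w),\qquad A(x):=\int_0^1 DF\bigl((1-s)\nabla u_\ep+s\nabla a\bigr)\,ds,
\end{equation*}
and the gradient bound above makes $A$ a uniformly elliptic, bounded matrix field on $B_\rho(x_0)$; standard elliptic regularity then even upgrades $w$ to $C^{2,\beta}$ on this ball.

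The hypothesis $\theta\ge 1$ enters here. By {\rm (F3)}, the quotient $h(t):=f(t)/t$ extends continuously to $t=0$ with $h\ge 0$ bounded on $[0,\max_\Om a]$, so $c(x):=\ep^{-1}u_\ep(x)^{q-1}h(w(x))$ is a nonnegative bounded coefficient and $w$ solves
\begin{equation*}
-\divg\bigl(A(x)\nabla w\bigr)+c(x)\,w=0\quad\text{in }B_\rho(x_0).
\end{equation*}
The classical strong minimum principle applied to $w\ge 0$ with the interior zero $w(x_0)=0$ forces $w\equiv 0$ on $B_\rho(x_0)$. A standard open--closed argument then propagates this globally: $\{w=0\}$ is closed by continuity and open in $\Om$, because at each of its points the identity $\nabla u_\ep=\nabla a\ne 0$ persists, allowing the same local linearization to be repeated. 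Hence $w\equiv 0$ on the connected domain $\Om$, contradicting $w=a>0$ on $\partial\Om$ by {\rm (A1)}. The main technical obstacle is securing the uniform ellipticity of $A$ in a neighborhood of every coincidence point; this is exactly what {\rm (A3)} combined with the $C^{1,\alpha}$-regularity of $u_\ep$ delivers, and it is also the step that breaks down when $\theta<1$, where $h$ blows up at $0$ and the linear strong minimum principle no longer applies.
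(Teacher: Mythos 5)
Your argument is correct, and it reaches the conclusion by a route that differs from the paper's in its key technical step. The paper keeps the quasilinear structure: it writes $v=a-u_\ep\ge 0$ as a solution of $-\ep\divg\Phi_p(\nabla v,\nabla a)+\Lambda_2 v^\theta\ge 0$, truncates $\Phi_p$ to $\tilde\Phi_p$ so that, thanks to (A3) and the global bound $\|\nabla v\|_{L^\infty(\Om)}\le k$, the factor $(|\eta-\nabla a|+|\nabla a|)^{p-2}$ is bounded above and below on all of $\Om$ (Lemma \ref{lem:order}), and then invokes Trudinger's weak Harnack inequality for quasilinear equations, with $\theta\ge 1$ ensuring the zero-order term is at most linear; the zero set of $v$ is then open and closed. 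You instead linearize: near a putative coincidence point $x_0$ you use that $x_0$ is an interior minimum of $w=a-u_\ep$, so $\nabla u_\ep(x_0)=\nabla a(x_0)\neq 0$ by (A3), freeze the difference of $p$-Laplacians through the averaged Jacobian $A(x)=\int_0^1 DF((1-s)\nabla u_\ep+s\nabla a)\,ds$, which is uniformly elliptic and bounded on a small ball because the segment of gradients stays in an annulus away from the origin, and apply the linear strong minimum principle (equivalently the linear weak Harnack inequality) to $-\divg(A\nabla w)+c\,w=0$ with $c=\ep^{-1}u_\ep^{q-1}f(w)/w\ge 0$ bounded precisely because $\theta\ge 1$; the open--closed propagation is the same. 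Your route uses only linear elliptic theory and avoids the truncation and the quasilinear structure conditions, at the modest cost of having to prove $u_\ep\le a$ first (your test-function argument with $(u_\ep-a)^+$ does this correctly; in the paper this comes from the supersolution $\ol u=a$ in Proposition \ref{lem:convergence}) and of the extra observation that $\nabla w$ vanishes at every interior zero, which is what restores non-degeneracy locally where the paper's Lemma \ref{lem:order} gives it globally. Two blemishes, neither fatal: the asserted strict monotonicity of $t\mapsto t^{q-p}f(a(x)-t)$ on all of $(0,\infty)$ is not quite right where $f(a-t)<0$ (and is unnecessary, since your direct comparison argument suffices), and the claimed upgrade to $C^{2,\beta}$ is both unjustified under (F1)--(F3) (it would need H\"older continuity of $f$ at $0$) and unneeded, since the strong minimum principle in divergence form holds for $W^{1,2}$ weak solutions with bounded measurable coefficients and bounded $c\ge 0$.
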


In our approach, it is significant to study the translation
$-\ep \Delta_p(v-a)$ of the principal part
$-\ep \Delta_p v$.
Putting $\Phi_p(\nabla v,\nabla a)=\nabla_p(v-a)+\nabla_p a$
and using (A2), 
we see that $\Phi_p(0,\nabla a)=0$ and that
the translation can be represented as
the monotone operator $v \mapsto -\ep \divg 
\Phi_p(\nabla v,\nabla a)$. The vector-valued function 
$\Phi_p(\eta,\nabla a)$ has a different order at $\eta=0$ 
from what $\Phi_p(\eta,0)$  
has if and only if $a(x)$ is non-degenerate. 
This is the reason why
the conditions of $\theta$ in the theorems 
differ from those in case $a(x)$ is constant.

Theorems \ref{thm:main} and \ref{thm:main2} 
are proved in Section 4. In order to show Theorem
\ref{thm:main}, letting the solution $u_\ep$
be close to $a(x)$ as $\ep \to 0$ (the convergence will be shown in Section 2),  
we compare $u_\ep$ with a local comparison function which attains
$a(x)$. Such a comparison function is
 obtained in Section 3 by means of 
the energy method developed by D\'{i}az and V\'{e}ron \cite{DV}
(see also D\'{i}az \cite{D}, and Antontsev, D\'{i}az and Shmarev 
\cite{ADS}).
In proving Theorem \ref{thm:main2}, we give 
a Harnack type inequality by Trudinger \cite{Tr}
for an associated differential inequality.
Finally, in Section 5, we apply our method to the known 
case where $a(x)$ is constant and realize the necessity of modifying 
the condition of $\theta$ to $0<\theta<p-1$.

The corresponding theorems for $N=1$ to Theorems \ref{thm:main}
and \ref{thm:main2} have been already 
obtained in the author's paper \cite{T3}.


\begin{rem}
If $\Om=\mathbb{R}^N$, then the corresponding 
problem to \eqref{eq:p}
$$-\ep \Delta_p u=u^{q-1}f(a(x)-u) \quad \mbox{in}\ \mathbb{R}^N$$
is trivial.
Indeed, since $a(x)$ is a positive and $p$-harmonic function
in $\mathbb{R}^N$,
it is constant by Liouville's theorem for $p$-Laplacian 
\cite[Corollary 7.2.3]{PS} and any nonnegative 
solution of \eqref{eq:p}
must be the constant (see Du and Guo \cite{DG}).
\end{rem}

Through the paper, we denote by $C$ positive constants 
independent of $\ep$ and $\delta$, unless otherwise noted.

\section{Convergence to $a(x)$ as $\ep \to 0$}

In this section, we show that the solution of 
\eqref{eq:p} converges to $a(x)$ uniformly in any compact set of 
$\Om$ as $\ep \to 0$. 

A function $u=u_\ep \in W^{1,p}_0(\Om) \cap L^{\infty}(\Om)$ is called
a \textit{solution of} \eqref{eq:p} if $u \ge 0$ a.e. in $\Om$,
$u$ does not vanish in a set of positive measure, and
$$\ep \int_\Om \nabla_p u \cdot \nabla \vp \, dx
= \int_\Om u^{q-1}f(a(x)-u) \vp \,dx$$
for all $\vp \in W^{1,p}_0(\Om)$.
A function $u=u_\ep \in W^{1,p}_0(\Om) \cap L^{\infty}(\Om)$ is called
a \textit{supersolution} (resp. \textit{subsolution}) \textit{of} \eqref{eq:p} 
if $u \ge 0$ (resp. $u \le 0$) a.e. on $\partial \Om$ and
$$\ep \int_\Om \nabla_p u \cdot \nabla \vp \, dx
\ge\ (\mbox{resp.}\ \le )\ \int_\Om u^{q-1}f(a(x)-u) \vp \,dx$$
for all $\vp \in W^{1,p}_0(\Om)$ satisfying $\vp \ge 0$ a.e. in $\Om$.
If a function $u$ is not only a supersolution but also a subsolution,
then $u$ must be a solution of \eqref{eq:p}.

 We denote by $\lambda_1$ the first eigenvalue to the 
 following eigenvalue problem
 and by $z$ the corresponding eigenfunction to $\lambda_1$
 with $\|z\|_{L^\infty(\Om)}=\sup_{x \in \Om}{|z(x)|}=1$:
 \begin{equation*}
  \begin{cases}
   -\Delta_p z =\lambda |z|^{p-2}z
   \quad & \mbox{in}\ \Om,\\
   z=0 & \mbox{on}\ \partial \Om.
  \end{cases}
 \end{equation*}
 It is well-known that $\lambda_1>0,\ z \in C^{1,\alpha}(\ol{\Om})$ 
for some $\alpha \in (0,1)$ and $z>0$ in $\Om$.
 Let $B(x_0,r)=\{x \in \mathbb{R}^N: |x-x_0|<r\}$,
 $\Om_{\ep}=\{x \in \Om:\dist(x,\partial \Om) \ge \ep\}$
 and $d=\inf_{x \in \Om}{a(x)}/2>0$.

\begin{prop}
 \label{lem:convergence}
 Assume $a(x)$ to satisfy {\rm (A1)} and {\rm (A2)}.
 For each $\delta \in (0,2d)$,
 there exist $K>0$ and $\ep_* \in (0,\ep_a)$ 
 such that if $\ep \in (0,\ep_*)$ then the 
 solution $u_{\ep}$ of \eqref{eq:p} satisfies
 $$a(x)-\delta \le u_{\ep}(x) \leq a(x)\quad 
 \mbox{for all $x \in \Om_{K\ep^{1/p}}$}.
 $$
\end{prop}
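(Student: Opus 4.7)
The plan is to establish the two inequalities separately. The upper bound $u_\ep \le a$ throughout $\Omega$ is immediate from comparison: condition (A2) gives $-\ep\Delta_p a = 0 = a^{q-1}f(a-a)$, so $a$ is a supersolution, and $a>0=u_\ep$ on $\partial\Omega$. The nonlinearity $u\mapsto u^{q-1}f(a(x)-u)$ fits the D\'{i}az--Sa\'{a} framework because the ratio $u\mapsto u^{q-p}f(a(x)-u)$ is strictly decreasing on $(0,a(x)]$ (we use $q\le p$, $f$ strictly increasing, and $f(a-u)\ge 0$ there). A standard weak comparison argument with test function $(u_\ep-a)_+$ then yields $u_\ep\le a$ in $\Omega$.

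For the lower bound, the plan is to produce, for each $x_0$ with $\mathrm{dist}(x_0,\partial\Omega)\ge K\ep^{1/p}$, a local subsolution on $B:=B(x_0,K\ep^{1/p})$ whose value at $x_0$ is $a(x_0)-\delta$. Let $z_B$ be the normalized principal Dirichlet $p$-eigenfunction on $B$ ($\|z_B\|_{L^\infty(B)}=1$, attained at the radial center $x_0$), and set $A:=a(x_0)-\delta$, $v(x):=A\,z_B(x)$. By the scaling of the $p$-Laplacian eigenvalue,
$$
-\ep\Delta_p v = \ep A^{p-1}\lambda_1(B)\,z_B^{p-1} = \frac{A^{p-1}\lambda_1(B_1)}{K^p}\,z_B^{p-1}.
$$
Dividing by $v^{q-1}=A^{q-1}z_B^{q-1}$, the subsolution inequality becomes
$$
\frac{A^{p-q}\lambda_1(B_1)}{K^p}\,z_B^{p-q} \le f\bigl(a(x)-A\,z_B(x)\bigr)\quad\text{on } B.
$$

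Now $\delta<2d=\inf_\Omega a\le a(x_0)$ and $a\in C^{1,\alpha}(\ol\Omega)$ is uniformly continuous, so once $K\ep^{1/p}\le\rho_0(\delta)$ we have $a(x)\ge a(x_0)-\delta/2$ on $B$; together with $z_B\le 1$ this gives $a(x)-A\,z_B(x)\ge\delta/2$ uniformly on $B$, hence $f\bigl(a(x)-Az_B(x)\bigr)\ge f(\delta/2)>0$. Using $z_B^{p-q}\le 1$ and $A^{p-q}\le\|a\|_{L^\infty(\Omega)}^{p-q}$ (both valid since $p\ge q$), it is enough to choose $K$ so that $\|a\|_{L^\infty(\Omega)}^{p-q}\lambda_1(B_1)/K^p \le f(\delta/2)$; this determines $K$ as a function of $\delta$ alone, independent of $\ep$ and $x_0$. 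With such $K$ and with $\ep_*:=\min\{\ep_a,(\rho_0(\delta)/K)^p\}$, the function $v$ is a weak subsolution of \eqref{eq:p} on $B$ and satisfies $v=0\le u_\ep$ on $\partial B$. The same D\'{i}az--Sa\'{a} monotonicity invoked above yields $v\le u_\ep$ in $B$, so $u_\ep(x_0)\ge v(x_0)=A=a(x_0)-\delta$. Since $x_0\in\Omega_{K\ep^{1/p}}$ was arbitrary, the lower bound follows on the whole of $\Omega_{K\ep^{1/p}}$.

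I expect the main obstacle to be arranging the subsolution inequality with constants that are uniform in $x_0$: one needs $K$ to absorb simultaneously the scale-invariant eigenvalue factor $\lambda_1(B_1)/K^p$, the $x_0$-dependent multiplier $A^{p-q}$, and the pointwise discrepancy $a(x)-a(x_0)$ on $B$. The uniform bounds $\inf_\Omega a>0$, $\|a\|_{L^\infty(\Omega)}<\infty$ and the uniform modulus of continuity of $a$ from (A1)--(A2) are precisely what make these three quantities controllable by constants depending only on $\delta$; this is the only place where the smoothness of $a$ enters in this proposition (note that the non-degeneracy (A3) is not required here, unlike in Theorem~\ref{thm:main}).
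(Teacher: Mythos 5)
Your barrier construction is essentially the paper's: the supersolution is $a$ itself, and the subsolution is $(a(x_0)-\delta)$ times the first Dirichlet $p$-eigenfunction scaled to a ball of radius $K\varepsilon^{1/p}$, with $K^p$ chosen to beat $\lambda_1\|a\|_{L^\infty(\Omega)}^{p-q}/f(\delta/2)$ and $\varepsilon_*$ chosen so that the ball fits inside the set where $a(x)\ge a(x_0)-\delta/2$; those computations match the paper line for line (your use of the unit-ball eigenfunction, radial and maximal at the center, is if anything cleaner than the paper's rescaling). Where you genuinely diverge is in how the barriers are transferred to $u_\varepsilon$. The paper never compares $u_\varepsilon$ directly with the local subsolution: it extends the subsolution by zero to all of $\Omega$, notes $\underline{u}\le \overline{u}=a$, invokes the Deuel--Hess sub/supersolution existence theorem to produce a solution squeezed between them, and then identifies that solution with $u_\varepsilon$ by the global uniqueness of the positive solution (D\'{i}az--Sa\'{a}); both bounds drop out at once. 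You instead prove $u_\varepsilon\le a$ by testing with $(u_\varepsilon-a)_+$ (fine: on $\{u_\varepsilon>a\}$ the reaction term is $\le 0$, so no ratio-monotonicity is needed there) and then want a direct comparison $v\le u_\varepsilon$ on the ball.

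That last step is the one place you should be more careful. A ``standard weak comparison'' with test function $(v-u_\varepsilon)_+$ does \emph{not} work for the lower bound, because $s\mapsto s^{q-1}f(a(x)-s)$ is not monotone in $s$ (the factor $s^{q-1}$ is increasing); this is exactly why you must appeal to the decreasing ratio $s\mapsto s^{q-p}f(a(x)-s)$. But the D\'{i}az--Sa\'{a} theorem as cited in the paper is a uniqueness statement for solutions with equal (zero) boundary data, not a sub/supersolution comparison on a subdomain with $v=0<u_\varepsilon$ on $\partial B$. The comparison you need is true and provable by the hidden-convexity (Picone) device: test the subsolution inequality with $(v^p-u_\varepsilon^p)_+/v^{p-1}$ and the equation for $u_\varepsilon$ with $(v^p-u_\varepsilon^p)_+/u_\varepsilon^{p-1}$ (admissible because $u_\varepsilon\ge c>0$ on $\overline{B}$, both functions are $C^1$, and the positive part vanishes near $\partial B$), use the Picone inequality on the gradient terms and the strict monotonicity of the ratio on $(0,a(x)]$ (note $v<a$ and $u_\varepsilon\le a$ there) on the reaction terms. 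Either supply this argument explicitly, or sidestep it by following the paper: extend $v$ by zero to $\Omega$, check it is a global subsolution below the supersolution $a$, and conclude via Deuel--Hess plus uniqueness. With one of these two repairs your proof is complete.
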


\begin{proof}
 It is clear from (A2) that $\ol{u}=a$ is a
 supersolution of \eqref{eq:p} for every $\ep>0$.

 We shall construct a subsolution of \eqref{eq:p}.
 From the uniform continuity of $a(x)$
 in $\ol{\Om}$, 
 there exists $r>0$ such that for every $x_0 \in \Om$,
 $a(x)>a(x_0)-\delta/2$ for all $x \in B(x_0,r) \cap \Om$,
 and hence for each $x \in B(x_0,r) \cap \Om$,
 $a(x)-u>\delta/2$ for all $u \in [0,a(x_0)-\delta]$.
 Therefore, $f(a(x)-u) \ge \sigma=f(\delta/2)$
 for all $x \in B(x_0,r)\cap \Om$ and $u \in [0,a(x_0)-\delta]$.
 Let $K>0$ be a constant satisfying
 $K^p > \lambda_1\|a\|_{L^\infty(\Om)}^{p-q}/\sigma$ 
 and choose $\ep_* \in (0,\ep_a)$
 such that
 $K\ep_*^{1/p}<r$.

 Take any $\ep \in (0,\ep_*)$ and $x_0 \in 
 \Om_{K\ep^{1/p}}$.
 Changing scaling as $\ul{z}(x)=z((x-x_0)/(K\ep^{1/p}))$, 
 we have
 \begin{equation*}
  \begin{cases}
   -\ep \Delta_p \ul{z}=\dfrac{\lambda_1}{K^p}\ul{z}^{p-1}
   \quad & \mbox{in}\ B(x_0,K\ep^{1/p}),\\
   \ul{z}=0 & \mbox{on}\ \partial B(x_0,K\ep^{1/p}).
  \end{cases}
 \end{equation*}
 Then the function
 \begin{equation*}
  \ul{u}(x)=\begin{cases}
	     (a(x_0)-\delta)
	     \ul{z}(x), \quad & x \in B(x_0,K\ep^{1/p}),\\
	     0, & x \in \Om \setminus B(x_0,K\ep^{1/p})
	    \end{cases}
 \end{equation*}
 is a nonnegative subsolution of \eqref{eq:p}.
  Indeed, $a(x_0) \ge 2d>\delta$, and 
 for every $\vp \in W^{1,p}_0(\Om)$ with $\vp \ge 0$
 \begin{align*}
  \frac{1}{(a(x_0)-\delta)^{q-1}} &
  \left(\ep \int_{\Om} \nabla_p \ul{u} \cdot
  \nabla \vp\, dx
  -\int_{\Om} \ul{u}^{q-1}f(a(x)-\ul{u})\vp\, dx\right)\\
  & \le - \ep \int_{B(x_0,K\ep^{1/p})} (a(x_0)-\delta)^{p-q}
  \Delta_p \ul{z}\, \vp\,dx
  - \sigma\int_{B(x_0,K\ep^{1/p})}
  \ul{z}^{q-1}\vp\, dx\\
  &= \int_{B(x_0,K\ep^{1/p})}
  \left(\frac{\lambda_1(a(x_0)-\delta)^{p-q}}{K^p}
  \ul{z}^{p-q}-\sigma\right)
  \ul{z}^{q-1}\vp\, dx\\
  & \le \left(\frac{\lambda_1\|a\|_{L^\infty(\Om)}^{p-q}}{K^p}-\sigma\right)
  \int_{B(x_0,K\ep^{1/p})}\ul{z}^{q-1}\vp\, dx \le 0.
 \end{align*}

 Since 
 $\ul{u}<\ol{u}$ in $\Om$, there exists 
 a solution $u^*$ of \eqref{eq:p} with 
 $\ul{u} \le u^* \le \ol{u}$ in $\Om$ 
 (e.g., Deuel and Hess \cite{DH}).
 As mentioned in Section 1, the solution
 of \eqref{eq:p} is unique. Therefore, $u^*=u_\ep$, and hence 
 $\ul{u} \le u_\ep \le \ol{u}$ in $\Om$. 
 In particular, $a(x_0)-\delta \le u_\ep(x_0) \leq
 a(x_0)$ for all $x_0 \in \Om_{K\ep^{1/p}}$
 when $0<\ep<\ep_*$.
\end{proof}

\begin{rem}
 Even if (A2) is not assumed, then we can 
 prove that $|u_\ep-a|<\delta$. Indeed, we can construct a 
 supersolution of \eqref{eq:p} close to $a(x)$ from above.
 Let $p \ge 2$ for simplicity, 
 and assume $\ol{u}$ to be an arbitrary \textit{smooth}
 function satisfying $a+\delta/2 < \ol{u} < a+\delta$.
 Since
 \begin{align*}
  -\ep \Delta_p\ol{u}-\ol{u}^{q-1}f(a(x)-\ol{u})
  \ge -\ep \Delta_p \ol{u}+C (\ol{u}-a(x))^\theta
  \ge -\ep \Delta_p \ol{u}+C\left(\frac{\delta}{2}\right)^\theta
 \end{align*}
 for all $x \in \Om$ and $\Delta_p\ol{u}$ is continuous in $\ol{\Om}$,
 the last expression can be positive 
 provided $\ep$  is small enough. For the case $1<p<2$, 
 we refer to \cite{T1}.
\end{rem}

\section{Auxiliary problem near $a(x)$}

In this section, we show that there exists a comparison function
with dead core, which satisfies an equation having a
subsolution $a-u_\ep \ge 0$.

We define the vector-valued function $\Phi_p: \mathbb{R}^N 
\times \mathbb{R}^N \to \mathbb{R}^N$ as
$$\Phi_p(\eta,\xi)=|\eta-\xi|^{p-2}(\eta-\xi)+|\xi|^{p-2}\xi.$$
In particular, we note 
that $\Phi_p(\nabla u,\nabla v)=\nabla_p(u-v)+\nabla_pv$ for gradients.

The following lemma
means that for each $\xi \neq 0$ the function $\Phi_p(\eta,\xi)$ 
is of order $1$ at $\eta=0$.
\begin{lem}
 \label{lem:order}
For all $\eta,\ \xi \in \mathbb{R}^N$
 with $|\eta-\xi|+|\xi|>0$
 \begin{align}
  \label{eq:ge}
  \Phi_p(\eta,\xi)\cdot\eta & \ge \min\{p-1,2^{2-p}\}
  (|\eta-\xi|+|\xi|)^{p-2}|\eta|^2,\\
  \label{eq:le}
  |\Phi_p(\eta,\xi)| & \le \max\{p-1,2^{2-p}\}
  (|\eta-\xi|+|\xi|)^{p-2}|\eta|.
 \end{align}
 For all $\eta,\ \eta',\ \xi \in \mathbb{R}^N$ 
with $|\eta-\xi|+|\eta'-\xi|>0$
  \begin{align}
  \label{eq:sage}
  (\Phi_p(\eta,\xi)-\Phi_p(\eta',\xi))\cdot(\eta-\eta') 
  & \ge  \min\{p-1,2^{2-p}\}(|\eta-\xi|
  +|\eta'-\xi|)^{p-2}|\eta-\eta'|^2,\\
  \label{eq:sale}
  |\Phi_p(\eta,\xi)-\Phi_p(\eta',\xi)| 
  & \le \max\{p-1,2^{2-p}\}
  (|\eta-\xi|+|\eta'-\xi|)^{p-2}|\eta-\eta'|. 
 \end{align}
\end{lem}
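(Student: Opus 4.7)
The plan is to reduce everything to the classical monotonicity and Lipschitz-type estimates for the vector field $V_p(z):=|z|^{p-2}z$, which is the building block of $\Phi_p$. Indeed, by definition $\Phi_p(\eta,\xi)=V_p(\eta-\xi)+V_p(\xi)$, so for any $\eta,\eta'$ the term $V_p(\xi)$ cancels and
\[
\Phi_p(\eta,\xi)-\Phi_p(\eta',\xi)=V_p(\eta-\xi)-V_p(\eta'-\xi).
\]
Moreover, taking $\eta=0$ gives $\Phi_p(0,\xi)=-V_p(\xi)+V_p(\xi)=0$. So \eqref{eq:ge} and \eqref{eq:le} are nothing but the special cases $\eta'=0$ of \eqref{eq:sage} and \eqref{eq:sale}. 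The task is therefore really to prove two inequalities: for all $z,w\in\mathbb{R}^N$ with $|z|+|w|>0$,
\begin{align*}
 (V_p(z)-V_p(w))\cdot(z-w) & \ge \min\{p-1,2^{2-p}\}(|z|+|w|)^{p-2}|z-w|^2,\\
 |V_p(z)-V_p(w)| & \le \max\{p-1,2^{2-p}\}(|z|+|w|)^{p-2}|z-w|,
\end{align*}
and then set $z=\eta-\xi$, $w=\eta'-\xi$, noting that $z-w=\eta-\eta'$.

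For the two estimates on $V_p$, I would use the integral representation along the segment $[w,z]$. Away from the origin, $V_p$ is smooth and its Jacobian is
\[
DV_p(y)=|y|^{p-2}\bigl(I+(p-2)\tfrac{y\otimes y}{|y|^2}\bigr),
\]
a symmetric matrix with eigenvalues $|y|^{p-2}$ (of multiplicity $N-1$) and $(p-1)|y|^{p-2}$ (simple), so its smallest and largest eigenvalues are $\min\{1,p-1\}|y|^{p-2}$ and $\max\{1,p-1\}|y|^{p-2}$. Writing
\[
V_p(z)-V_p(w)=\Bigl(\int_0^1 DV_p\bigl(w+t(z-w)\bigr)\,dt\Bigr)(z-w),
\]
the lower/upper inequalities reduce to controlling $\int_0^1 |w+t(z-w)|^{p-2}\,dt$ from below and above by a constant multiple of $(|z|+|w|)^{p-2}$. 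The bound with constant $2^{2-p}$ (vs.\ the trivial constant $1$) is what forces the $\min/\max$ against $p-1$: the two ranges $p\ge 2$ and $1<p<2$ require opposite monotonicity of $|\cdot|^{p-2}$, and in each range one of the two competing constants is attained.

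The main obstacle is carrying out this last comparison cleanly in both regimes. For $p\ge2$, $|\cdot|^{p-2}$ is a convex nonnegative function, and one exploits that $|w+t(z-w)|\le |z|+|w|$ to bound it \emph{from above}, whereas for the \emph{lower} bound one instead notes that the segment passes through a point whose norm is at least a definite fraction of $|z-w|$, so $\int_0^1|w+t(z-w)|^{p-2}dt\ge c_p|z-w|^{p-2}$, then uses $|z-w|\ge\tfrac12(|z|+|w|)$ on the set where this is delicate (the factor $2^{2-p}$ originates here). For $1<p<2$ the roles reverse: the singularity of $|\cdot|^{p-2}$ at the origin forces one to bound the integral from \emph{below} by $(|z|+|w|)^{p-2}$ directly, while the upper bound requires isolating the potentially small denominator. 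Once these two elementary calculus estimates on $\int_0^1|w+t(z-w)|^{p-2}\,dt$ are in hand, plugging them into the integral representation yields the two inequalities on $V_p$, and hence \eqref{eq:ge}--\eqref{eq:sale} by the substitutions indicated above. When $|z|+|w|=0$ or $|\eta-\xi|+|\eta'-\xi|=0$ the statements are vacuous, so this case can be excluded from the start.
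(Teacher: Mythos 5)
Your reductions are fine: with $V_p(z)=|z|^{p-2}z$ one indeed has $\Phi_p(\eta,\xi)-\Phi_p(\eta',\xi)=V_p(\eta-\xi)-V_p(\eta'-\xi)$ and $\Phi_p(0,\xi)=0$, so \eqref{eq:ge}--\eqref{eq:le} are the case $\eta'=0$ of \eqref{eq:sage}--\eqref{eq:sale}; this mirrors (in the reverse order) the paper's identity $\Phi_p(\eta,\xi)-\Phi_p(\eta',\xi)=\Phi_p(\eta-\eta',\xi-\eta')$. The gap is in the core estimate. Writing $y_t=w+t(z-w)$, you propose to bound the integrand by an extreme eigenvalue of $DV_p(y_t)$ times $|z-w|^2$ and then estimate $\int_0^1|y_t|^{p-2}\,dt$ separately. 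This decoupling cannot reach the stated constants. For $p>2$ the smallest eigenvalue of $DV_p(y)$ is $|y|^{p-2}$, not $(p-1)|y|^{p-2}$, and the best uniform lower bound for the integral is $\int_0^1|y_t|^{p-2}\,dt\ge(|z|+|w|)^{p-2}\int_0^1|t-t_0|^{p-2}\,dt\ge\tfrac{2^{2-p}}{p-1}(|z|+|w|)^{p-2}$ with $t_0=|w|/(|z|+|w|)$, so your route gives \eqref{eq:sage} only with $2^{2-p}/(p-1)$ in place of $2^{2-p}$; symmetrically, for $1<p<2$ the largest eigenvalue is $|y|^{p-2}$ and you get \eqref{eq:sale} only with $2^{2-p}/(p-1)>2^{2-p}$. (Your alternative bookkeeping via ``$|z-w|\ge\tfrac12(|z|+|w|)$ on the delicate set'' is not a valid inequality in general --- take $z=w$ --- and the case split is not spelled out.) There is no slack to absorb this loss: for $p=4$, $\eta'=0$ and any $\xi$ with $\xi\cdot\eta=\tfrac12|\eta|^2$ one computes $\Phi_4(\eta,\xi)\cdot\eta=\bigl(\tfrac14|\eta|^2+|\xi_\perp|^2\bigr)|\eta|^2=2^{-2}(|\eta-\xi|+|\xi|)^{2}|\eta|^2$, i.e.\ \eqref{eq:ge} holds with equality, while your decoupled lower bound $|\eta|^2\int_0^1|t\eta-\xi|^{2}\,dt=\bigl(\tfrac1{12}|\eta|^2+|\xi_\perp|^2\bigr)|\eta|^2$ is strictly smaller. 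So the sentence ``the lower/upper inequalities reduce to controlling $\int_0^1|w+t(z-w)|^{p-2}\,dt$'' is precisely where the argument fails.

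The constants $\min\{p-1,2^{2-p}\}$ and $\max\{p-1,2^{2-p}\}$ are attainable only because the worst configuration for $\int_0^1|y_t|^{p-2}\,dt$ (segment passing near the origin, $t_0$ near $\tfrac12$) is exactly the one where $y_t$ is nearly parallel to $z-w$, so the full quadratic form $DV_p(y_t)(z-w)\cdot(z-w)=|y_t|^{p-2}|z-w|^2+(p-2)|y_t|^{p-4}\bigl(y_t\cdot(z-w)\bigr)^2$ carries the factor $p-1$ there, and $(p-1)\cdot\tfrac{2^{2-p}}{p-1}=2^{2-p}$. This is how the paper proceeds: it parametrizes $t\mapsto V_p(t\eta-\xi)$, keeps the factor $p-1$ in $(p-1)\int_0^1|t\eta-\xi|^{p-2}\,dt$ (its displayed identities are exact in the collinear case, which is the extremal one), and then uses $|t\eta-\xi|\le|\eta-\xi|+|\xi|$ in one range of $p$ and $|t\eta-\xi|\ge(|\eta-\xi|+|\xi|)|t-t_0|$ together with $\int_0^1|t-t_0|^{p-2}\,dt\gtrless 2^{2-p}/(p-1)$ in the other. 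To repair your sketch you must therefore estimate the full quadratic form under the $t$-integral (keeping the longitudinal term), rather than multiplying an eigenvalue bound by an integral bound. Note finally that the weaker constants your route does deliver (for instance $2^{2-p}/(p-1)$) would suffice for every later application of the lemma in the paper, where only positivity of the constants is used --- but they do not prove the lemma as stated.
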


\begin{proof}
 By the mean value theorem, we have
 \begin{align}
  \label{eq:seki}
  (\Phi_p(\eta,\xi),\eta)&=(p-1)|\eta|^2\int_0^1
  |t\eta-\xi|^{p-2}\,dt,\\
  \label{eq:abs}
  |\Phi_p(\eta,\xi)|&=(p-1)|\eta|\int_0^1|t\eta-\xi|^{p-2}\,dt.
 \end{align}
 Since $|t\eta-\xi|=|t(\eta-\xi)-(1-t)\xi|
 \le |\eta-\xi|+|\xi|$ for all
 $t \in [0,1]$, equation \eqref{eq:seki} yields
 \eqref{eq:ge} if $1<p \le 2$, while
 \eqref{eq:abs} yields \eqref{eq:le} if $p \ge 2$.

 Putting $t_0=|\xi|/(|\eta-\xi|+|\xi|) 
 \in (0,1]$, we have
 \begin{align*}
  |t\eta-\xi|
  \ge|t|\eta-\xi|-(1-t)|\xi||
  =(|\eta-\xi|+|\xi|)|t-t_0|.
 \end{align*}
  If $p>2$ (resp. $1<p<2$), then for every $t_0 \in (0,1]$ we have that 
 $\int_0^1|t-t_0|^{p-2}\,dt \ge$ (resp. $\le$) 
 $2 \int_0^{1/2}z^{p-2}\,dz
 =2^{2-p}/(p-1)$, thus \eqref{eq:seki} 
 (resp.\ \eqref{eq:abs}) yields \eqref{eq:ge}
 (resp.\ \eqref{eq:le}).
 
Since $\Phi_p(\eta,\xi)-\Phi_p(\eta',\xi)
=\Phi_p(\eta-\eta',\xi-\eta')$,
\eqref{eq:sage} and \eqref{eq:sale}
follow from \eqref{eq:ge} and \eqref{eq:le}, respectively. 
\end{proof}

Let $\Lambda$ be a positive constant.
Take $x_0 \in \Om,\ \delta \in (0,1)$ 
and $\ep \in (0,1)$ such that
$B=B(x_0,\ep^{1/p}) \subset \Om$.
Consider the boundary value problem
  \begin{equation}
   \label{eq:lcf}
   \begin{cases}
    -\ep \divg\Phi_p(\nabla w,\nabla a)+\Lambda |w|^{\theta-1}w=0 
    \quad & \mbox{in}\ B,\\
    w=\delta & \mbox{on}\ \partial B.
   \end{cases}
  \end{equation}
 For Propositions \ref{prop:comparison} and 
\ref{prop:existence} below,
we assume only $a \in W^{1,p}(B)$ 
without (A1), (A2) and (A3).

\begin{prop}
\label{prop:comparison}
Let $g$ be a non-decreasing function, and
suppose that $u,\ v \in W^{1,p}(B) \cap L^\sigma(B)$, where
$\sigma \in [1,\infty]$, satisfy
$g(u),\ g(v) \in L^{\sigma^*}(B)$, where
$\sigma^*=\frac{\sigma}{\sigma-1}$
$(\sigma^*=\infty$ if $\sigma=1$ and $\sigma^*=1$ if $\sigma=\infty)$,
and
\begin{equation*}
\begin{cases}
-\divg\Phi_p(\nabla u,\nabla a)+g(u) 
\le -\divg\Phi_p(\nabla v,\nabla a)+g(v) 
\quad & \mbox{in}\ B,\\
u \le v & \mbox{on}\ \partial B.
\end{cases}
\end{equation*}
Then, $u \le v$ a.e. in $B$. 
\end{prop}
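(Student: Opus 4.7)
The plan is to use the standard monotonicity argument: test the difference of the two weak differential inequalities against the positive-part truncation $\varphi := (u-v)_+$, and then exploit both the strict monotonicity of $\Phi_p(\cdot,\nabla a)$ quantified by \eqref{eq:sage} and the monotonicity of $g$. Admissibility of $\varphi$ comes essentially for free from the hypotheses: since $u,v \in W^{1,p}(B)$ with $u \le v$ on $\partial B$ in the trace sense, the standard truncation lemma gives $\varphi \in W^{1,p}_0(B)$ with $\nabla \varphi = (\nabla u - \nabla v)\chi_{\{u>v\}}$ a.e., and $\varphi \in L^\sigma(B)$ since $u,v$ are. The divergence-form term pairs with $\varphi$ via Hölder, using the bound $|\Phi_p(\nabla u,\nabla a)| \le C(|\nabla u - \nabla a|^{p-1}+|\nabla a|^{p-1}) \in L^{p/(p-1)}(B)$; the zero-order term pairs via Hölder with exponents $\sigma^\ast,\sigma$, which is exactly why the integrability condition on $g(u),g(v)$ is imposed.

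Subtracting the two inequalities, testing against $\varphi$, and integrating the divergence term by parts then produces
\begin{equation*}
\int_B \bigl(\Phi_p(\nabla u,\nabla a)-\Phi_p(\nabla v,\nabla a)\bigr)\cdot \nabla \varphi\, dx
+ \int_B \bigl(g(u)-g(v)\bigr)\varphi\, dx \le 0.
\end{equation*}
On $\{u>v\}$ the first integrand equals $\bigl(\Phi_p(\nabla u,\nabla a)-\Phi_p(\nabla v,\nabla a)\bigr)\cdot(\nabla u - \nabla v)$, which is nonnegative by \eqref{eq:sage}, vanishing only on the degenerate locus $\{|\nabla u - \nabla a|+|\nabla v - \nabla a|=0\}$, where $\nabla u = \nabla v = \nabla a$ and the integrand is trivially zero anyway; the second integrand is nonnegative because $g$ is non-decreasing. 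Hence both integrals vanish.

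From the vanishing of the first integral together with \eqref{eq:sage}, I conclude $\nabla u = \nabla v$ a.e. on $\{u>v\}$, so $\nabla \varphi = 0$ a.e. in $B$, and the Poincar\'e inequality on $W^{1,p}_0(B)$ then yields $\varphi \equiv 0$, i.e., $u \le v$ a.e. The only point requiring care is the bookkeeping of the dual pairings and of the degenerate locus of \eqref{eq:sage}; the $L^\sigma$--$L^{\sigma^\ast}$ hypothesis is designed precisely to legitimize the test-function computation, and I do not anticipate any real analytical obstacle beyond that.
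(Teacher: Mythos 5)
Your proposal is correct and follows essentially the same route as the paper: test the subtracted inequalities with $(u-v)^+$, use \eqref{eq:sage} together with the monotonicity of $g$ to force both integrals to vanish, conclude $\nabla(u-v)^+=0$ a.e., and hence $(u-v)^+=0$ since it lies in $W^{1,p}_0(B)$. The extra bookkeeping you supply (admissibility of the test function, the H\"older pairings justified by the $L^{\sigma}$--$L^{\sigma^*}$ hypothesis, and the degenerate locus of \eqref{eq:sage}) is consistent with, and slightly more detailed than, the paper's argument.
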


\begin{proof}
Using $(u-v)^+ \in W^{1,p}_0(B) \cap L^\sigma(B)$ as a test function, 
we get
$$\int_D
(\Phi_p(\nabla u,\nabla a)-\Phi_p(\nabla v,\nabla a))\cdot
(\nabla u-\nabla v)\,dx
\le -\int_D (g(u)-g(v))
(u-v)\,dx \le 0,$$
where $D=\{x \in B: u(x)>v(x)\}$.
On the other hand,  
the integrand of the left-hand side is non-negative
because of \eqref{eq:sage}. 
Thus, we conclude $\nabla u=\nabla v$ a.e. in $D$, 
and hence $\nabla (u-v)^+=0$ a.e. in $B$,
which means $(u-v)^+=0$ a.e. in $B$. 
Therefore, $u \le v$ a.e. in $B$. 
\end{proof}
  
\begin{prop}
\label{prop:existence}
For any $\ep>0$, 
there exists a unique solution $w \in W^{1,p}(B) \cap L^{\infty}(B)$ 
of \eqref{eq:lcf}. Moreover, $0 \le w \le \delta$
a.e. in $B$.
\end{prop}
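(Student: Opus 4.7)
The plan is to obtain existence by minimizing a convex energy and to deduce uniqueness from the comparison principle (Proposition \ref{prop:comparison}). Because $\Phi_p(0,\nabla a)=0$, equation \eqref{eq:lcf} is the Euler--Lagrange equation of
\[
J(w)=\frac{\ep}{p}\int_B|\nabla w-\nabla a|^p\,dx+\ep\int_B|\nabla a|^{p-2}\nabla a\cdot\nabla w\,dx+\frac{\Lambda}{\theta+1}\int_B|w|^{\theta+1}\,dx,
\]
and I would minimize $J$ over the affine class $\mathcal{A}=\{w\in W^{1,p}(B)\cap L^\infty(B):w-\delta\in W^{1,p}_0(B)\}$, truncating the zero-order nonlinearity at level $\delta$ if necessary so that the last term is unambiguously finite regardless of the Sobolev exponent.

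Next I would verify the direct-method ingredients on $\mathcal{A}$. The first term of $J$ is finite since $\nabla w-\nabla a\in L^p(B)$, the second by H\"older using $|\nabla a|^{p-1}\in L^{p/(p-1)}(B)$, and the third by boundedness. Coercivity follows from the elementary inequality $|\nabla w|^p\le 2^{p-1}(|\nabla w-\nabla a|^p+|\nabla a|^p)$ combined with the Poincar\'e inequality applied to $w-\delta\in W^{1,p}_0(B)$. Strict convexity of the first and third terms together with linearity of the middle one makes $J$ strictly convex, hence weakly lower semicontinuous. The direct method then yields a unique minimizer $w$, which is identified as a weak solution of \eqref{eq:lcf} by computing $\la J'(w),\vp\ra$ for $\vp\in W^{1,p}_0(B)$.

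For the pointwise bounds I would apply Proposition \ref{prop:comparison} with $g(s)=\Lambda|s|^{\theta-1}s$, which is non-decreasing on $\mathbb{R}$ since $\theta>0$. Because $\Phi_p(0,\nabla a)=0$, the constants $\ol w\equiv\delta$ and $\ul w\equiv 0$ satisfy $-\divg\Phi_p(\nabla\ol w,\nabla a)+g(\ol w)=\Lambda\delta^\theta\ge 0$ and $-\divg\Phi_p(\nabla\ul w,\nabla a)+g(\ul w)=0$, while the minimizer $w$ yields equality to $0$; together with the boundary data $\ul w=0\le w=\delta=\ol w$ on $\partial B$, Proposition \ref{prop:comparison} gives $0\le w\le\delta$ a.e.\ in $B$. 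This a posteriori $L^\infty$ bound justifies the truncation used at the variational step and places $g(w)$ in $L^\infty(B)$. Uniqueness of any solution in $W^{1,p}(B)\cap L^\infty(B)$ then follows at once by applying Proposition \ref{prop:comparison} in both directions to two candidate solutions.

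The main point to watch is the interaction between the variational setup, which prefers integrability of $|w|^{\theta+1}$, and the pointwise bounds, which come from comparison with constants; a truncation of the nonlinearity at level $\delta$ circumvents this cleanly, since the comparison then shows the minimizer takes values in $[0,\delta]$ and therefore solves the original, untruncated equation \eqref{eq:lcf}.
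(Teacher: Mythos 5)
Your proposal is correct and follows essentially the same route as the paper: existence by the direct method applied to the convex energy $J$, and both the bound $0\le w\le\delta$ and uniqueness from the comparison principle of Proposition \ref{prop:comparison} (the paper invokes it with $g(s)=|s|^{\theta-1}s$ and $\sigma=1+\theta$; your comparison of $w$ with the constants $0$ and $\delta$, using $\Phi_p(0,\nabla a)=0$, is exactly how that is used). The one real difference is how the zero-order term is handled when $1+\theta$ exceeds the Sobolev exponent: the paper keeps the nonlinearity intact and minimizes over $K=\{u\in W^{1,p}(B)\cap L^{1+\theta}(B):u-\delta\in W^{1,p}_0(B)\}$, extracting a subsequence converging weakly in both $W^{1,p}(B)$ and $L^{1+\theta}(B)$, whereas you truncate $g$ at level $\delta$ and remove the integrability issue altogether, recovering the untruncated equation \eqref{eq:lcf} a posteriori from $0\le w\le\delta$; both devices work, and yours is tidy precisely because comparison with constants is available anyway. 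Two small repairs to your write-up: once you truncate, minimize over the affine class $\{w\in W^{1,p}(B):w-\delta\in W^{1,p}_0(B)\}$ \emph{without} the $L^\infty$ constraint, since that constraint is not weakly closed and no uniform $L^\infty$ bound is available along a minimizing sequence -- boundedness of the minimizer should be an output of the comparison step, not part of the admissible class; and your coercivity argument must absorb the sign-indefinite middle term $\ep\int_B|\nabla a|^{p-2}\nabla a\cdot\nabla w\,dx$ by Young's inequality, exactly as in the paper's estimate \eqref{eq:J}, before the elementary inequality and Poincar\'e can be applied.
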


\begin{proof}
We set the $C^1$-energy functional $J$ corresponding 
to \eqref{eq:lcf} as
$$J(u)=\frac{\ep}{p}\int_B |\nabla u-\nabla a|^p\,dx
+\ep\int_B \nabla_p a \cdot \nabla u\,dx
+\Lambda \int_B |u|^{1+\theta}\,dx,$$
which is defined in
$$K=\{u \in W^{1,p}(B) \cap L^{1+\theta}(B):u-\delta \in W^{1,p}_0(B)\}.$$
Since
$$|\nabla_p a\cdot \nabla u| \le |\nabla a|^{p-1}|\nabla u-\nabla a|+|\nabla a|^p
\le \frac{1}{2p}|\nabla u-\nabla a|^p+C |\nabla a|^p,$$
we have
\begin{equation}
\label{eq:J}
J(u) \ge \frac{\ep}{2p}\int_B|\nabla u-\nabla a|^p\,dx+\Lambda
\int_B|u|^{1+\theta}\,dx-C \ep \int_B|\nabla a|^p\,dx.
\end{equation}
Then we see that 
$J$ is bounded from below and $J_0=\inf_{u \in K}J(u)$ exists. 
It suffices to show that there exists $w \in K$ such that 
$J(w)=J_0$. 

Let $\{u_n\}$ be a minimizing sequence such that $u_n \in K$
and $J(u_n) \to J_0$ as $n \to \infty$. Then, by \eqref{eq:J}
we obtain
\begin{align*}
\int_B|\nabla u_n-\nabla a|^p\,dx, \quad 
\int_B|u_n|^{1+\theta}\,dx \quad \le C,
\end{align*}
so that $\{u_n-\delta\}$ and $\{u_n\}$ are bounded in 
the reflexive Banach spaces $W^{1,p}_0(B)$ and 
$L^{1+\theta}(B)$, respectively.
Thus, we can choice a subsequence, 
which is denoted $u_n$ again, and 
$w \in K$
such that $u_n \to w$ weakly in $W^{1,p}(B)$
and weakly in $L^{1+\theta}(B)$. Thus, 
\begin{align}
& \liminf_{n \to \infty}\|u_n-a\|_{W^{1,p}(B)}
\ge \|w-a\|_{W^{1,p}(B)}, \label{eq:conv1}\\
& \lim_{n \to \infty}\int_B \nabla_p a \cdot \nabla u_n\,dx 
=\int_B \nabla_p a \cdot \nabla w\,dx, \label{eq:conv2}\\
& \liminf_{n \to \infty}\|u_n\|_{L^{1+\theta}(B)}
\ge \|w\|_{L^{1+\theta}(B)}. \label{eq:conv3}
\end{align}
Since 
$u_n \to w$ strongly in $L^p(B)$
by the Poincar\'{e} inequality,
it follows from \eqref{eq:conv1}
that
\begin{equation}
\label{eq:conv4}
\liminf_{n \to \infty} \|\nabla (u_n-a)\|_{L^p(B)}
\ge \|\nabla (w-a)\|_{L^p(B)}. 
\end{equation}
Therefore, by \eqref{eq:conv2}, \eqref{eq:conv3} and 
\eqref{eq:conv4}, we conclude that 
$J_0=\liminf_{n \to \infty}J(u_n) \ge J(w) \ge J_0$,
so that $J(w)=J_0$. The uniqueness and the boundedness 
of solutions follow from 
Proposition \ref{prop:comparison}
with $g(s)=|s|^{\theta-1}s$ and $\sigma=1+\theta$.
\end{proof}

To show that the solution $w$ of \eqref{eq:lcf}
has a dead core for any $\ep>0$, scaling is useful:   
setting $y=\ep^{-1/p}(x-x_0),\ \tilde{w}(y)=\tilde{w}(y;\ep,x_0)=w(x+\ep^{1/p}y)$
and $\tilde{a}(y)=\tilde{a}(y;\ep,x_0)=a(x_0+\ep^{1/p}y)$
in \eqref{eq:lcf}, we have
  \begin{equation}
   \label{eq:lcftw}
   \begin{cases}
    -\divg\Phi_p(\nabla \tilde{w},\nabla \tilde{a})+
\Lambda \tilde{w}^{\theta}=0
    \quad & \mbox{in}\ B(0,1),\\
    \tilde{w}=\delta & \mbox{on}\ \partial B(0,1).
   \end{cases}
  \end{equation}
We shall write $B_\rho$ to represent $B(0,\rho)$.

\begin{lem}
 \label{lem:w_x}
 Let $a(x)$ satisfy {\rm (A2)}, and
 assume $\tilde{w}$ to be the unique solution of \eqref{eq:lcftw}. 
	Then $\tilde{w} \in C^{1,\alpha}(\ol{B_1})$ for some 
$\alpha \in (0,1)$ and $\|\nabla(\tilde{w}-\tilde{a})\|_{L^{\infty}
(B_1)} \le C$, 
 where $C$ is independent of $\ep,\ \delta$ and $x_0$.
\end{lem}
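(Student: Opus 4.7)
The plan is to reduce \eqref{eq:lcftw} to a standard $p$-Laplacian equation with bounded right-hand side and uniformly controlled Dirichlet data, and then apply the boundary $C^{1,\alpha}$ estimate of Lieberman \cite{L} (already used in Section~1). Specifically, I would set $v = \tilde w - \tilde a$. Since $\Phi_p(\nabla \tilde w, \nabla \tilde a) = \nabla_p v + \nabla_p \tilde a$, and since the scaling $\tilde a(y) = a(x_0 + \ep^{1/p} y)$ preserves $p$-harmonicity, (A2) gives $\Delta_p \tilde a = 0$ in $B_1$. Hence \eqref{eq:lcftw} rewrites as
$$-\Delta_p v + \Lambda \tilde w^{\theta} = 0 \ \mbox{in}\ B_1, \qquad v = \delta - \tilde a \ \mbox{on}\ \partial B_1.$$

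Next I would collect uniform bounds on the data of this reduced problem. Proposition \ref{prop:existence} yields $0 \le \tilde w \le \delta \le 1$, so the forcing satisfies $\|\Lambda \tilde w^{\theta}\|_{L^\infty(B_1)} \le \Lambda$ with no dependence on $\ep$, $\delta$ or $x_0$. A direct chain-rule computation gives
$$\|\tilde a\|_{C^{1,\alpha}(\ol{B_1})} \le C\|a\|_{C^{1,\alpha}(\ol{\Om})},$$
since each derivative of $\tilde a$ carries a factor $\ep^{1/p} \le 1$ and the H\"older seminorm an extra factor $\ep^{\alpha/p} \le 1$. The boundary datum $\delta - \tilde a|_{\partial B_1}$ therefore has a $C^{1,\alpha}$ norm bounded independently of $\ep$, $\delta$, $x_0$.

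Finally I would apply Lieberman's boundary $C^{1,\alpha}$ regularity on the smooth domain $B_1$ to the reduced equation, obtaining a constant $C = C(p, N, \Lambda, \|a\|_{C^{1,\alpha}(\ol{\Om})})$ with $\|v\|_{C^{1,\alpha}(\ol{B_1})} \le C$; this delivers both $\tilde w = v + \tilde a \in C^{1,\alpha}(\ol{B_1})$ and the required uniform bound on $\|\nabla(\tilde w - \tilde a)\|_{L^\infty(B_1)}$. The delicate point will be verifying that the constant from Lieberman's estimate really depends only on the listed quantities and not implicitly on $\ep$, $\delta$, $x_0$; the substitution $v = \tilde w - \tilde a$ is what makes this possible, by removing $\tilde a$ from the principal part and concentrating all parameter dependence into the bounded right-hand side and the Dirichlet datum, both of which have already been controlled uniformly.
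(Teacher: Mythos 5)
Your proposal is correct and follows essentially the same route as the paper: substitute $v=\tilde w-\tilde a$, use the $p$-harmonicity of $\tilde a$ (preserved under the scaling) to reduce \eqref{eq:lcftw} to $-\Delta_p v+\Lambda\tilde w^{\theta}=0$ with $0\le\tilde w\le\delta\le 1$ and boundary datum $\delta-\tilde a$ whose $C^{1,\alpha}(\partial B_1)$ norm is bounded by $1+\|a\|_{C^{1,\alpha}(\ol{\Om})}$, then invoke Lieberman's boundary $C^{1,\alpha}$ estimate to get a bound independent of $\ep$, $\delta$ and $x_0$. (Incidentally, your boundary condition $v=\delta-\tilde a$ corrects a sign slip in the paper's own write-up.)
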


\begin{proof}
Setting 
$v(y)=\tilde{w}(y)-\tilde{a}(y)$,
we have
  \begin{equation*}
   \begin{cases}
    -\Delta_p v+\Lambda (v+\tilde{a})^{\theta}=0 
    \quad & \mbox{in}\ B_1,\\
    v=\delta+\tilde{a} & \mbox{on}\ \partial B_1.
   \end{cases}
  \end{equation*}
Since $\|v+\tilde{a}\|_{L^{\infty}(B_1)} \le \delta \le 1$ 
by Proposition \ref{prop:comparison} and 
$\delta+\tilde{a}\ |_{\partial B_1} \in 
C^{1,\alpha}(\partial B_1)$ with $\|\delta+\tilde{a}\|_{C^{1,\alpha}(\partial B_1)}
\le \|\delta+\tilde{a}\|_{C^{1,\alpha}(\ol{B_1})}
\le 1+\|a\|_{C^{1,\alpha}(\ol{\Om})}$
(for the norm of $C^{1,\alpha}(\partial B_1)$, see 
Gilbarg and Trudinger \cite[Section 6.2]{GT}), 
it follows from a regularity result of Lieberman \cite{L}
that $v \in C^{1,\alpha}(\ol{B_1})$ and 
$\|v\|_{C^{1,\alpha}(\ol{B_1})} \le C$ 
for some $\alpha \in (0,1)$ and $C>0$ are 
independent of $\ep,\ \delta$ and $x_0$.
In particular, $\|\nabla v\|_{L^\infty(B_1)} \le C$.
\end{proof}

 \begin{prop}
\label{prop:lcf}
  Let $a(x)$ satisfy {\rm (A2)} and {\rm (A3)}, 
  and assume 
  $w$ to be the unique solution of \eqref{eq:lcf}.
  If $0<\theta<1$,
  then there exists $M>0$ independent of $\ep,\ \delta$ and $x_0$ 
  such that
  $w(x)=0$ for all $x \in 
  B(x_0,(1-M\delta^{(1+\theta)\gamma})^{1/\tau}\ep^{1/p})$,
  where
  \begin{align*}
  \gamma&=\frac{\frac{1}{1+\theta}-\frac12}
	{N\left(\frac{1}{1+\theta}-\frac12\right)+1}
	\in \left(0,\frac{1}{N+2}\right),\\
  \tau&=2N\left(\frac{1}{1+\theta}-\frac12\right)+2
	\in \left(2,N+2\right).
  \end{align*}
  In particular, $w(x_0)=0$ for arbitrary $\ep>0$ 
if $\delta^{(1+\theta)\gamma}<M^{-1}$.
 \end{prop}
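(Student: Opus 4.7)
The plan is to carry out a Díaz--Véron type energy argument in the scaled problem \eqref{eq:lcftw}, exploiting that the non-degeneracy (A3) makes the linearisation of $-\divg\Phi_p(\nabla\,\cdot\,,\nabla\tilde{a})$ a uniformly elliptic second-order operator near $\nabla\tilde{w}=0$. This is what shifts the dead-core threshold from the usual $\theta<p-1$ of the constant-$a$ case down to $\theta<1$, and it is the reason (A3) must appear in an essential way.

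For $\rho\in(0,1]$ put
\[
\mathcal{E}(\rho):=\int_{B_\rho}|\nabla\tilde{w}|^2\,dy+\int_{B_\rho}\tilde{w}^{1+\theta}\,dy.
\]
Testing \eqref{eq:lcftw} against $\tilde{w}$ on $B_\rho$ and applying Lemma~\ref{lem:order} together with the $L^\infty$-gradient bounds from Lemma~\ref{lem:w_x} and $|\nabla\tilde{a}|\le C\ep^{1/p}$ produces the coercivity inequality
\[
\mathcal{E}(\rho)\le C\int_{\partial B_\rho}\tilde{w}\,|\nabla\tilde{w}|\,dS.
\]
Two pieces of information then enter. First, at $\rho=1$, since $\tilde{w}\equiv\delta$ on $\partial B_1$, the divergence theorem applied to $\Phi_p(\nabla\tilde w,\nabla\tilde a)$ together with the equation yields
\[
\int_{\partial B_1}\tilde{w}\,\Phi_p(\nabla\tilde{w},\nabla\tilde{a})\cdot\nu\,dS=\delta\,\Lambda\int_{B_1}\tilde{w}^{\theta}\,dy\le C\delta^{1+\theta},
\]
so $\mathcal{E}(1)\le C\delta^{1+\theta}$. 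Second, a Hölder plus Gagliardo--Nirenberg/trace interpolation between $L^{1+\theta}(B_\rho)$ and $W^{1,2}(B_\rho)\hookrightarrow L^{2^{*}}(B_\rho)$ converts the right-hand side of the coercivity inequality into a combination of $\mathcal{E}(\rho)$ and $\mathcal{E}'(\rho)=\int_{\partial B_\rho}(|\nabla\tilde{w}|^2+\tilde{w}^{1+\theta})\,dS$. After exponent bookkeeping one arrives at the ODI
\[
\mathcal{E}'(\rho)\ge C\rho^{\tau-1}\mathcal{E}(\rho)^{1-\gamma},
\]
in which $\gamma,\tau$ are exactly the exponents of the statement; in particular the positivity $\gamma>0$ reduces to $\tfrac{1}{1+\theta}-\tfrac12>0$, i.e.\ $\theta<1$.

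Integrating the ODI from $\rho_*$ to $1$ and inserting $\mathcal{E}(1)\le C\delta^{1+\theta}$ yields $\mathcal{E}(\rho)\equiv 0$ on $[0,\rho_*]$ as long as $\rho_*^{\tau}\le 1-M\delta^{(1+\theta)\gamma}$, which, upon undoing the scaling $y=\ep^{-1/p}(x-x_0)$, gives the ball claimed in the proposition. The main obstacle is the uniform coercivity $\Phi_p(\nabla\tilde{w},\nabla\tilde{a})\cdot\nabla\tilde{w}\ge c_1|\nabla\tilde{w}|^2$ with $c_1$ independent of $\ep$, because the weight $(|\nabla\tilde{w}-\nabla\tilde{a}|+|\nabla\tilde{a}|)^{p-2}$ appearing in \eqref{eq:ge} behaves oppositely in the two ranges of $p$. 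For $1<p\le 2$, the uniform upper bound on $|\nabla\tilde{w}-\nabla\tilde{a}|+|\nabla\tilde{a}|$ supplied by Lemma~\ref{lem:w_x} immediately gives the required lower bound on the weight; for $p\ge 2$, one instead reads the same energy identity in the unscaled ball $B(x_0,\ep^{1/p})$, where (A3) provides the uniform lower bound $|\nabla a|\ge\inf_\Om|\nabla a|>0$, and the $\ep$-independence of all constants is preserved because the identity is homogeneous under the rescaling.
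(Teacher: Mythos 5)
Your skeleton is the paper's: the D\'{i}az--V\'{e}ron energy method applied to the rescaled problem \eqref{eq:lcftw}, a global energy bound of size $\delta^{1+\theta}$ (your derivation, testing with $\tilde w$ and using the divergence theorem, is equivalent to the paper's choice of test function $\delta-\tilde w$), the $L^2$--$L^{1+\theta}$ trace interpolation, the ODI $E_T'(\rho)\ge C\rho^{\tau-1}E_T(\rho)^{1-\gamma}$ with exactly the stated $\gamma,\tau$, and integration on $[\rho,1]$. The gap is in the step you yourself call the main obstacle: the $\ep$-uniform pointwise comparison between $\Phi_p(\nabla\tilde w,\nabla\tilde a)\cdot\nabla\tilde w$ and $|\nabla\tilde w|^2$. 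Note first that the rescaling gives $\nabla\tilde a(y)=\ep^{1/p}\nabla a(x_0+\ep^{1/p}y)$, so in the scaled ball (A3) only yields $|\nabla\tilde a|\ge \ep^{1/p}\inf_\Om|\nabla a|$; the scaled operator is therefore \emph{not} uniformly elliptic near $\nabla\tilde w=0$ with constants independent of $\ep$, contrary to the heuristic in your opening paragraph, and this is precisely why the step is delicate.

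Concretely, your inequality $\mathcal{E}(\rho)\le C\int_{\partial B_\rho}\tilde w\,|\nabla\tilde w|\,dS$ requires both directions at once: $\Phi_p(\nabla\tilde w,\nabla\tilde a)\cdot\nabla\tilde w\ge c\,|\nabla\tilde w|^2$ in $B_\rho$ \emph{and} $|\Phi_p(\nabla\tilde w,\nabla\tilde a)|\le C\,|\nabla\tilde w|$ on $S_\rho$, i.e.\ by \eqref{eq:ge}--\eqref{eq:le} an upper \emph{and} a lower bound on the weight $(|\nabla\tilde w-\nabla\tilde a|+|\nabla\tilde a|)^{p-2}$, both uniform in $\ep,\delta,x_0$ (for $p=2$ the weight is $1$ and there is nothing to do). For $1<p<2$ you verify only the coercivity side via Lemma \ref{lem:w_x}; the bound $|\Phi_p|\le C|\nabla\tilde w|$ then needs $|\nabla\tilde w-\nabla\tilde a|+|\nabla\tilde a|$ bounded \emph{below}, and in scaled variables the only available bound is $\ep^{1/p}\inf_\Om|\nabla a|$ --- indeed $|\Phi_p(\eta,\xi)|\sim|\eta|^{p-1}\gg|\eta|$ as $(\eta,\xi)\to(0,0)$ --- so this half is simply not addressed. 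For $p\ge2$ your fix of reading the identity in the unscaled ball does restore the coercivity constant ($|\nabla a|\ge\inf_\Om|\nabla a|$), but it destroys the companion upper bound, since unscaled Lemma \ref{lem:w_x} gives only $|\nabla(w-a)|\le C\ep^{-1/p}$, and it leaves the radius-dependent trace/Gagliardo--Nirenberg constants on $B(x_0,\ep^{1/p})$ untracked. The ratio between the available upper and lower bounds on $|\nabla(w-a)|+|\nabla a|$ is of order $\ep^{-1/p}$ in either picture and is scale invariant, so the phrase ``the $\ep$-independence of all constants is preserved because the identity is homogeneous under the rescaling'' is not an argument: changing coordinates only moves the $\ep$-dependence from the coercivity constant to the boundedness constant. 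This is exactly the point where the paper's proof invokes Lemma \ref{lem:order}, Lemma \ref{lem:w_x} and (A3) --- in \eqref{eq:E_T} and in the estimate $E_D(\rho)\ge C\|\nabla\tilde w\|_{L^2(B_\rho)}^2$ --- while keeping $E_D(\rho)=\int_{B_\rho}\Phi_p(\nabla\tilde w,\nabla\tilde a)\cdot\nabla\tilde w\,dy$ as the diffusion energy so that the surface identity \eqref{eq:E_T1} is exact; your write-up has to supply those same two weight bounds with $\ep$-independent constants, and as it stands it does not.
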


\begin{proof}
 It is sufficient to prove
 the existence of dead core for the solution of \eqref{eq:lcftw}.
  To do this, we follow 
 the energy method developed by D\'{i}az and V\'{e}ron \cite{DV}
 (see also D\'{i}az \cite{D}, and Antontsev, D\'{i}az and
 Shmarev \cite{ADS}).

 We define the diffusion
 and absorption energy functions $E_D(\rho)$ and $E_A(\rho)$ 
 in $(0,1)$
 as follows:
 \begin{align*}
  E_D(\rho)&= \int_{B_\rho} 
  \Phi_p(\nabla \tilde{w}(y),\nabla \tilde{a}(y))\cdot \nabla \tilde{w}(y)\,dy,\\
  E_A(\rho)&=\int_{B_\rho}|\tilde{w}(y)|^{1+\theta}\,dy.
 \end{align*}
 The total energy function $E_T(\rho)$ is defined as
 \begin{equation*}
  E_T(\rho)=E_D(\rho)+\Lambda E_A(\rho).
 \end{equation*}
 The global total energy $E_T(1)$ is finite.
 Indeed, (we write $w,\ a$ instead 
 of $\tilde{w},\ \tilde{a}$,
 respectively),
 multiplying the equation of \eqref{eq:lcftw} by 
 the nonnegative function $\delta-w 
 \in W^{1,p}_0(B_1)$ and 
 integrating by parts in $B_1$, we have 
 \begin{equation}
  \label{eq:total}
   E_T(1) \le \Lambda\delta^{1+\theta}|B_1| 
   \le C\delta^{1+\theta}.
 \end{equation}

  Multiplying the equation of \eqref{eq:lcftw} by $w$ and 
 integrating by parts in $B_\rho$, we have also
 (now we shall write $S_\rho$ to represent $\partial B_\rho$)
 \begin{equation}
  \label{eq:E_T1}
  E_T(\rho)=\int_{S_\rho}\Phi_p(\nabla w(y),\nabla a(y))
\cdot n\,w(y)\,ds,
 \end{equation}
 where $n=n(s)$ is the outward normal vector at $y \in S_\rho$.
 By \eqref{eq:E_T1}, Lemmas \ref{lem:order} and \ref{lem:w_x}
 with (A3)
 \begin{align}
  \label{eq:E_T}
  E_T(\rho) &=\int_{S_\rho}|\Phi_p(\nabla w,\nabla a)||w|\,ds \notag \\
  & \le \left( \int_{S_\rho}|\Phi_p(\nabla w,\nabla a)|^2\,ds\right)^{1/2}
  \left( \int_{S_\rho}|w|^2\,ds\right)^{1/2} \notag \\
  & \le \left( \int_{S_\rho}(|\nabla w-\nabla a|+|\nabla a|)^{2(p-2)}
	(\Phi_p(\nabla w,\nabla a)\cdot 
  \nabla w )\,ds\right)^{1/2}\|w\|_{L^2(S_\rho)}
	\notag \\
  & \le C \left( \int_{S_\rho}\Phi_p(\nabla w,\nabla a)
  \cdot \nabla w\,ds\right)
  ^{1/2}\|w\|_{L^2(S_\rho)}.
 \end{align}
 On the other hand, by using spherical coordinates $(\omega,r)$ with
 center $x_0$, we have
 \begin{equation*}
  E_D(\rho)=\int_0^\rho \int_{S^{N-1}} \Phi_p(\nabla w(r\omega),
  \nabla a(r\omega))\cdot
   \nabla w(r\omega)\, r^{N-1}\,d\omega\,dr.
 \end{equation*}
 Hence, $E_D$ is almost everywhere differentiable and
 \begin{align}
  \label{eq:dE_D}
  \frac{dE_D(\rho)}{d\rho}
   & =\int_{S^{N-1}}\Phi_p(\nabla w(\rho \omega),
   \nabla a(\rho \omega))\cdot \nabla w(\rho
   \omega)\, \rho^{N-1}\,d\omega \notag \\
   & =\int_{S_\rho} \Phi_p(\nabla w,\nabla a)\cdot \nabla w
   \,ds.
 \end{align}
 Similarly,
\begin{align}
  \label{eq:dE_A}
  \frac{dE_A(\rho)}{d\rho}
   =\int_{S_\rho} |w|^{1+\theta}\,ds.
 \end{align}
 Moreover, since $0<\theta<1$, 
 we have the following inequality (see 
 D\'{i}az \textit{et al}. \cite{DV,D,ADS}):
\begin{equation*}
  \|w\|_{L^2(S_\rho)} \le C\left(\|\nabla w\|_{L^2(B_\rho)}
  +\rho^{-\alpha}\|w\|_{L^{1+\theta}(B_\rho)}\right)^{\beta}
  \|w\|_{L^{1+\theta}(B_\rho)}^{1-\beta},
\end{equation*}
 where $C=C(N,\theta)$ and 
 \begin{align*}
  \alpha&=\frac{N(1-\theta)+2(1+\theta)}{2(1+\theta)}
	=N\left(\frac{1}{1+\theta}-\frac12\right)+1 
	\in \left(1,\frac{N}{2}+1\right) \subset (1,\infty),\\ 
   \beta&=\frac{N(1-\theta)+1+\theta}{N(1-\theta)+2(1+\theta)}
   =\frac{N\left(\frac{1}{1+\theta}-\frac12\right)+\frac12}
   {N\left(\frac{1}{1+\theta}-\frac12\right)+1} \in 
	\left(\frac12,\frac{N+1}{N+2}\right) \subset (0,1).
 \end{align*}
 Thus, from \eqref{eq:ge} and Lemma \ref{lem:w_x}, we 
obtain $E_D(\rho) \ge C\|\nabla w\|_{L^2(B_\rho)}^2$, so that 
 \begin{align}
  \label{eq:E_A}
  \|w\|_{L^2(S_\rho)}^{1/\beta}
  &\le C\left(\|\nabla w\|_{L^2(B_\rho)}+\rho^{-\alpha}
  \|w\|_{L^{1+\theta}(B_\rho)}\right)\|w\|_{L^{1+\theta}(B_\rho)}
  ^{\frac{1-\beta}{\beta}} \notag \\
  &= C\left(\|\nabla
  w\|_{L^2(B_\rho)}\|w\|_{L^{1+\theta}(B_\rho)}
  ^{\frac{1-\beta}{\beta}}
  +\rho^{-\alpha}\|w\|_{L^{1+\theta}(B_\rho)}^{1/\beta}\right)
   \notag\\
  & \le C\rho^{-\alpha}\left(\rho^{\alpha}
  E_D(\rho)^{\frac12}
  E_A(\rho)^{\frac{1-\beta}{\beta(1+\theta)}}
  +E_A(\rho)^{\frac{1}{\beta (1+\theta)}}\right)  \notag\\
  & \le C\rho^{-\alpha}\left(
  E_T(\rho)^{\frac12+\frac{1-\beta}{\beta(1+\theta)}}
  +E_A(1)^{\frac{1}{1+\theta}-\frac12}
  E_A(\rho)^{\frac12+\frac{1-\beta}{\beta(1+\theta)}}
  \right)  \notag\\
  & \le C\rho^{-\alpha} 
  E_T(\rho)^{\frac12+\frac{1-\beta}{\beta(1+\theta)}}.
 \end{align}
 Here we have used 
 that $E_A(1) \le C \delta^{1+\theta}<C$ and $0<\theta<1$.
 Combining \eqref{eq:E_T}--\eqref{eq:dE_A} and \eqref{eq:E_A},
 we obtain
 \begin{align*}
  E_T(\rho) & \le  C 
  \left(\frac{dE_T(\rho)}{d\rho}\right)^{1/2}
  \rho^{-\alpha \beta}
  E_T(\rho)^{\frac{\beta}{2}+\frac{1-\beta}{1+\theta}},
 \end{align*}
 that is,
 \begin{equation*}
   \frac{dE_T(\rho)}{d\rho} 
   \ge C \rho^{\tau-1}E_T(\rho)^{1-\gamma},
 \end{equation*}
 where
 \begin{align*}
  \gamma&=
2(1-\beta)\left(\frac{1}{1+\theta}-\frac12\right)
	=\frac{\frac{1}{1+\theta}-\frac12}
	{N\left(\frac{1}{1+\theta}-\frac12\right)+1}
	\in \left(0,\frac{1}{N+2}\right),\\
  \tau&=1+2\alpha\beta=2N \left(\frac{1}{1+\theta}-\frac12\right)+2
	\in (2,N+2).
  \end{align*}
 Integrating it on $[\rho,1]$ 
 and using \eqref{eq:total}, we have
 \begin{align*}
  E_T(\rho)^{\gamma} & \le E_T(1)^{\gamma}-
  C(1-\rho^{\tau})
  \le C(\rho^{\tau}-
  (1-M\delta^{(1+\theta) \gamma}))
 \end{align*}
 for some $M>0$,
 thus $E_T((1-M\delta^{(1+\theta) \gamma})^{1/\tau})=0$, i.e., 
 $\tilde{w}(y)=0$ for all $y \in 
 B(0,(1-M\delta^{(1+\theta) \gamma})^{1/\tau})$. 
 Scaling back to $x$, we conclude the assertion.
\end{proof}

\section{Proofs of Theorems}

Now we are in a position to prove Theorems \ref{thm:main} and
\ref{thm:main2}.

\begin{proof}[Proof of Theorem $\ref{thm:main}$]
Fix $\delta \in (0,d)$ such that $M\delta^{(1+\theta)\gamma}<1$,
where $M$ and $\gamma$ are the constants appearing in Proposition 
\ref{prop:lcf}.
 Thanks to the $p$-harmonicity of $a(x)$,
 the function $v=a-u_\ep$ satisfies that
 $-\ep \divg \Phi_p(\nabla v,\nabla a)=-(a(x)-v)^{q-1}f(v)$
 in the distribution sense in $\Om$. Since
 \begin{align*}
   (a(x)-s)^{q-1}f(s) \ge d^{q-1} C s^\theta=:\Lambda_1 s^{\theta}
  & \quad \mbox{for all $x \in \Om$
  and $s \in [0,\delta]$} 
 \end{align*}
 and
 by Proposition \ref{lem:convergence}, 
 $\max_{x \in \Om_{K\ep^{1/p}}} v_{\ep}(x) \le \delta$
 for every $\ep \in (0,\ep_*)$,
 we have
 \begin{equation}
  \label{eq:pm}
   -\ep \divg \Phi_p(\nabla v,\nabla a)+\Lambda_1 v^{\theta} \le 0
   \quad \mbox{in}\ \Om_{K\ep^{1/p}}.
 \end{equation}

 Let $\ep_0 \in (0,\ep_*)$ be small such that
 $\Om_{(K+1)\ep_0^{1/p}} \neq \emptyset$. 
 Take any $\ep \in (0,\ep_0)$ and $x_0 \in \Om_{(K+1)\ep^{1/p}}$.
 Letting $w$ be the solution of \eqref{eq:lcf},
 we can see
 \begin{equation}
  \label{eq:h}
   \begin{cases}
    -\ep \divg \Phi_p(\nabla w,\nabla a)+
	\Lambda_1 w^\theta=0 \quad & \mbox{in}\ 
    B(x_0,\ep^{1/p}),\\
    w=\delta & \mbox{on}\ \partial B(x_0,\ep^{1/p}).
   \end{cases}
 \end{equation}
 Since $B(x_0,\ep^{1/p}) \subset \Om_{K\ep^{1/p}}$ and 
 $v \le \delta=w$ on $\partial B(x_0,\ep^{1/p})$,
 it follows from 
 \eqref{eq:pm} and 
 \eqref{eq:h} that $v$ is a subsolution of \eqref{eq:h}.
 Therefore, Proposition \ref{prop:comparison} gives 
 $v \le w$ in $B(x_0,\ep^{1/p})$.
 Proposition \ref{prop:lcf} implies that
 $0 \le v_{\ep}(x_0) \le w(x_0)=0$, and hence
 $u(x_0)=a(x_0)$ for all $x_0 \in \Om_{(K+1)\ep^{1/p}}$. 
 This completes the proof of 
 Theorem \ref{thm:main}.
\end{proof}

\begin{proof}[Proof of Theorem $\ref{thm:main2}$]
Let $u_\ep$ be a solution of \eqref{eq:p}.
The function $v=a-u_\ep \ge 0,\ \not\equiv 0$, satisfies
\begin{equation*}
  -\ep \divg \Phi_p(\nabla v,\nabla a)+ \Lambda_2 v^{\theta} \ge 0
\end{equation*}
for some $\Lambda_2>0$. 
Since $u_\ep \in C^1(\ol{\Om})$ by the regularity result of 
Lieberman \cite{L}, so is $v$, and there exists $k>0$ such that 
$\|\nabla v\|_{L^\infty(\Om)} \le k$. We define
\begin{align*}
M_{p,k}&=\sup_{|\eta| \le k,x \in \Om}(|\eta-\nabla a(x)|
+|\nabla a(x)|)^{p-2},\\
m_{p,k}&=\inf_{|\eta| \le k,x \in \Om}(|\eta-\nabla a(x)|
+|\nabla a(x)|)^{p-2},
\end{align*}
which are both finite and positive for any $p>1$
because of (A3).
Then, $v$ is also a nonnegative bounded function satisfying
\begin{equation*}
  -\ep \divg \tilde{\Phi}_p(\nabla v,\nabla a)+ 
\Lambda_2 |v|^{\theta-1}v \ge 0,
\end{equation*}
where $\tilde{\Phi}_p(\eta,\nabla a)$ is a vector measurable
function as 
\begin{equation*}
\tilde{\Phi}_p(\eta,\nabla a)=
\begin{cases}
\Phi_p(\eta,\nabla a) & \mbox{if}\ |\eta|\le k,\\
M_{p,k} \eta & \mbox{if}\ |\eta|>k,
\end{cases}
\end{equation*}
which satisfies (from \eqref{eq:le} and \eqref{eq:ge}
in Lemma \ref{lem:order})
\begin{align*}
|\tilde{\Phi}_p(\eta,\nabla a(x))|
& \le M_{p,k} \max\{p-1,2^{2-p}\}\,|\eta|,\\
\tilde{\Phi}_p(\eta,\nabla a(x))\cdot \eta
& \ge m_{p,k} \min\{p-1,2^{2-p}\}\,|\eta|^2. 
\end{align*}
Moreover, if $\theta \ge 1$, then there exists $C>0$ such that 
$||s|^{\theta-1}s| \le C|s|$ if $|s| \le \|v\|_{L^\infty(\Om)}$.
Thus, the weak Harnack 
inequality by Trudinger \cite[Theorem 1.2]{Tr} 
(see also Pucci and Serrin \cite[Theorem 7.1.2]{PS}) follows:
for any $\ol{B(x_0,4\rho)} \subset \Om$ and $\gamma \in (0,\frac{N}{N-2})\ (\gamma \in (0,\infty)$ if $N=2)$, 
there exists $C=C(N,\gamma,\Lambda_2/\ep,\rho,p,k,M_{p,k},m_{p,k})$
 such that
\begin{align}
\label{eq:harnack}
\rho^{-\frac{N}{\gamma}}\|v\|_{L^\gamma(B(x_0,2\rho))}
\le C \inf_{x \in B(x_0,2\rho)}v(x).
\end{align}

Suppose $v(x_0)=0$ with $x_0 \in \Om$. Then the set
$O=\{x \in \Om:v(x)=0\}$, which is closed relatively to $\Om$ since
$v$ is continuous, is nonempty. Since $v$ is continuous, 
if $x\in O$ and $\ol{B(x,4\delta)} \subset \Om$,
then $\inf_{B(x,2\rho)}{v}=v(x)=0$. From \eqref{eq:harnack} we have
that $\|v\|_{L^\gamma(B(x,2\rho))}=0$ so that $v \equiv 0$
in $B(x,2\rho)$.
So $O$ is also open and since $\Om$ is connected it must be $O=\Om$,
i.e., $v \equiv 0$ in $\Om$, which is a contradiction. Therefore, 
$v$ is strictly positive in $\Om$, i.e., $u_\ep<a$ in $\Om$.
\end{proof}

\section{Degenerate case}

In this section, we consider the case where $a(x)$ 
is constant in $\Om$. As introduced in Section 1,
this case has been already treated by several papers \cite{GS,GV,Gu,KV,KS}.
Our approach can be applied to the case.

Since $\nabla a \equiv 0$ in this case,
we note $\Phi_p(\nabla w,\nabla a)=\nabla_p w$ 
and Propositions \ref{prop:comparison}, \ref{prop:existence}
and Lemma \ref{lem:w_x} are all satisfied.  
However, Proposition \ref{prop:lcf} has to be changed
as follows.

\theoremstyle{plain}
\newtheorem{proplcf}{Proposition \ref{prop:lcf}'}
\renewcommand{\theproplcf}{}
\begin{proplcf}
\label{prop:deglcf'}
  Let $a(x)$ be a constant in $\Om$,
  and assume 
  $w$ to be the unique solution of \eqref{eq:lcf}.
  If $0<\theta<p-1$,
  then there exists $M>0$ independent of $\ep,\ \delta$ and $x_0$ 
  such that
  $w(x)=0$ for all $x \in 
  B(x_0,(1-M\delta^{(1+\theta)\gamma})^{1/\tau}\ep^{1/p})$,
  where
  \begin{align*}
  \gamma&=\frac{\frac{1}{1+\theta}-\frac1p}
	{N\left(\frac{1}{1+\theta}-\frac1p\right)+1}
	\in \left(0,\frac{1}{N+p^*}\right),\\
  \tau&=Np^*\left(\frac{1}{1+\theta}-\frac1p\right)+p^*
	\in \left(p^*,N+p^*\right),
  \end{align*}
  where $p^*=\frac{p}{p-1}$.
  In particular, $w(x_0)=0$ for arbitrary $\ep>0$ 
if $\delta^{(1+\theta)\gamma}<M^{-1}$.
\end{proplcf}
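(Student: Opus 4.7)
The plan is to rerun the energy method of Proposition \ref{prop:lcf} almost verbatim, taking advantage of the fact that $\nabla a \equiv 0$ makes $\Phi_p(\nabla w,\nabla a)$ collapse to the pure $p$-gradient $\nabla_p w$. After the same scaling $y = \ep^{-1/p}(x-x_0)$, the problem \eqref{eq:lcftw} reduces to $-\Delta_p \tilde{w} + \Lambda \tilde{w}^\theta = 0$ in $B_1$ with $\tilde{w} = \delta$ on $\partial B_1$. I would take the diffusion energy in the natural form $E_D(\rho) = \int_{B_\rho} |\nabla\tilde{w}|^p\,dy$, so that $\Phi_p(\nabla\tilde{w},0)\cdot\nabla\tilde{w} = |\nabla\tilde{w}|^p$ identically and Lemma \ref{lem:order} is no longer needed. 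The absorption and total energies are unchanged, and the global bound $E_T(1) \le C\delta^{1+\theta}$ follows, as before, by testing the equation against $\delta - \tilde{w} \in W^{1,p}_0(B_1)$.

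The decisive modification lies in the sphere estimate. From $E_T(\rho) = \int_{S_\rho} |\nabla\tilde{w}|^{p-2}(\nabla\tilde{w}\cdot n)\tilde{w}\,ds$, I would apply H\"older's inequality with the conjugate exponents $p^* = p/(p-1)$ and $p$ in place of the pair $(2,2)$ used in Proposition \ref{prop:lcf}, obtaining
\[
E_T(\rho) \le \Bigl(\frac{dE_D(\rho)}{d\rho}\Bigr)^{1/p^*} \|\tilde{w}\|_{L^p(S_\rho)}.
\]
For the sphere norm I would invoke the $p$-analogue of the Gagliardo--Nirenberg trace-interpolation inequality used in the non-degenerate case,
\[
\|\tilde{w}\|_{L^p(S_\rho)} \le C\bigl(\|\nabla\tilde{w}\|_{L^p(B_\rho)} + \rho^{-\alpha}\|\tilde{w}\|_{L^{1+\theta}(B_\rho)}\bigr)^\beta \|\tilde{w}\|_{L^{1+\theta}(B_\rho)}^{1-\beta},
\]
with exponents $\alpha = N(\tfrac{1}{1+\theta} - \tfrac{1}{p}) + 1$ and the matching $\beta \in (0,1)$ dictated by scaling. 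These exponents are admissible precisely when $0<\theta<p-1$, which is why this assumption replaces $0<\theta<1$.

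Combining the two estimates, using $E_A(1) \le C\delta^{1+\theta} \le C$ to dominate the $\rho^{-\alpha}\|\tilde{w}\|_{L^{1+\theta}}$ summand, and carrying out the algebra exactly as in Proposition \ref{prop:lcf} leads to the differential inequality
\[
\frac{dE_T(\rho)}{d\rho} \ge C\rho^{\tau-1} E_T(\rho)^{1-\gamma},
\]
with $\gamma = p^*(1-\beta)(\tfrac{1}{1+\theta}-\tfrac{1}{p})$ and $\tau = 1 + p^*\alpha\beta$, which unwinds to the explicit formulas in the statement. Integrating on $[\rho,1]$ and using $E_T(1) \le C\delta^{1+\theta}$ then forces $E_T(\rho) = 0$ whenever $\rho \le (1 - M\delta^{(1+\theta)\gamma})^{1/\tau}$; scaling back to $x$ gives the claimed dead-core radius.

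The main technical obstacle is identifying the correct form of the trace-interpolation inequality in $W^{1,p}$ and verifying that $0<\theta<p-1$ is the sharp admissibility threshold: at $\theta = p-1$ one has $\tfrac{1}{1+\theta} - \tfrac{1}{p} = 0$, hence $\gamma$ collapses to zero and the ODE argument breaks down, consistent with the classical non-existence-of-flat-core results of \cite{GV,KV,Gu} at this threshold. Everything else is mechanical bookkeeping, replacing $2$ by $p^*$ throughout the chain of estimates and discarding Lemma \ref{lem:order} since $\nabla a \equiv 0$ removes all non-degeneracy considerations.
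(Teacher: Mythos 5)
Your proposal is correct and follows essentially the same route as the paper's own proof: the same energies $E_D(\rho)=\int_{B_\rho}|\nabla\tilde{w}|^p$, $E_A$, $E_T$, the bound $E_T(1)\le C\delta^{1+\theta}$ from testing with $\delta-\tilde{w}$, the H\"older estimate $E_T(\rho)\le\|\nabla\tilde{w}\|_{L^p(S_\rho)}^{p-1}\|\tilde{w}\|_{L^p(S_\rho)}$, the $W^{1,p}$ trace-interpolation inequality of D\'{i}az--V\'{e}ron with the same $\alpha,\beta$, and the resulting differential inequality with exactly the paper's $\gamma=p^*(1-\beta)\bigl(\tfrac{1}{1+\theta}-\tfrac1p\bigr)$ and $\tau=1+p^*\alpha\beta$. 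Your observation that Lemma \ref{lem:order} drops out and that $\theta<p-1$ is needed both for the interpolation exponents and to control $E_A(1)^{\frac{1}{1+\theta}-\frac1p}$ matches the paper's argument.
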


\begin{proof} 
 It is sufficient to prove
 the existence of dead core of solution of \eqref{eq:lcftw}.
 We define the diffusion
 and absorption energy functions $E_D(\rho)$ and $E_A(\rho)$ 
 in $(0,1)$
 as follows:
 \begin{align*}
  E_D(\rho)&= \int_{B_\rho} 
  |\nabla \tilde{w}(y)|^p\,dy,\\
  E_A(\rho)&=\int_{B_\rho}|\tilde{w}(y)|^{1+\theta}\,dy.
 \end{align*}
 The total energy function $E_T(\rho)$ is defined as
 \begin{equation*}
  E_T(\rho)=E_D(\rho)+\Lambda E_A(\rho).
 \end{equation*}
 The global total energy $E_T(1)$ is finite.
 Indeed, (we write $w$ instead 
 of $\tilde{w}$),
 multiplying the equation of \eqref{eq:lcftw} by 
 the nonnegative function $\delta-w 
 \in W^{1,p}_0(B_1)$ and 
 integrating by parts in $B_1$, we have 
 \begin{equation}
  \label{eq:degtotal}
   E_T(1) \le \Lambda\delta^{1+\theta}|B_1| 
   \le C\delta^{1+\theta}.
 \end{equation}

  Multiplying the equation of \eqref{eq:lcftw} by $w$ and 
 integrating by parts in $B_\rho$, we have also
 (now we shall write $S_\rho$ to represent $\partial B_\rho$)
 \begin{equation}
  \label{eq:degE_T1}
  E_T(\rho)=\int_{S_\rho}\nabla_p w(y)\cdot n\,w(y)\,ds,
 \end{equation}
 where $n=n(s)$ is the outward normal vector at $y \in S_\rho$.
 By \eqref{eq:degE_T1}
 \begin{align}
  \label{eq:degE_T}
  E_T(\rho) &=\int_{S_\rho}|\nabla_p w||w|\,ds
	\le \|\nabla w\|_{L^p(S_\rho)}^{p-1} \|w\|_{L^p(S_\rho)}.
 \end{align}
 On the other hand, by using spherical coordinates $(\omega,r)$ with
 center $x_0$, we have
 \begin{equation*}
  E_D(\rho)=\int_0^\rho \int_{S^{N-1}} |\nabla w(r\omega)|^p
	\, r^{N-1}\,d\omega\,dr.
 \end{equation*}
 Hence, $E_D$ is almost everywhere differentiable and
 \begin{align}
  \label{eq:degdE_D}
  \frac{dE_D(\rho)}{d\rho}
   & =\int_{S^{N-1}}|\nabla w(r\omega)|^p
	\rho^{N-1}\,d\omega
    =\int_{S_\rho} |\nabla w|^p\,ds.
 \end{align}
 Similarly,
\begin{align}
  \label{eq:degdE_A}
  \frac{dE_A(\rho)}{d\rho}
   =\int_{S_\rho} |w|^{1+\theta}\,ds.
 \end{align}
 Moreover, since $0<\theta<p-1$, 
 we have the following inequality (see 
 D\'{i}az \textit{et al}. \cite{DV,D,ADS}):
\begin{equation*}
  \|w\|_{L^p(S_\rho)} \le C\left(\|\nabla w\|_{L^p(B_\rho)}
  +\rho^{-\alpha}\|w\|_{L^{1+\theta}(B_\rho)}\right)^{\beta}
  \|w\|_{L^{1+\theta}(B_\rho)}^{1-\beta},
\end{equation*}
 where $C=C(N,\theta)$ and 
 \begin{align*}
  \alpha&=\frac{N(p-1-\theta)+p(1+\theta)}{p(1+\theta)}
	=N\left(\frac{1}{1+\theta}-\frac1p\right)+1
	\in \left(1,\frac{N}{p^*}+1\right) \subset (1,\infty),\\ 
   \beta&=\frac{N(p-1-\theta)+1+\theta}{N(p-1-\theta)+p(1+\theta)}
   =\frac{N\left(\frac{1}{1+\theta}-\frac1p\right)+\frac1p}
   {N\left(\frac{1}{1+\theta}-\frac1p\right)+1} 
	\in \left(\frac{1}{p},\frac{N+\frac{1}{p-1}}{N+p^*}\right) \subset (0,1).
 \end{align*}
 Thus,
 \begin{align}
  \label{eq:degE_A}
  \|w\|_{L^p(S_\rho)}^{1/\beta}
  &\le C\left(\|\nabla w\|_{L^p(B_\rho)}+\rho^{-\alpha}
  \|w\|_{L^{1+\theta}(B_\rho)}\right)\|w\|_{L^{1+\theta}(B_\rho)}
  ^{\frac{1-\beta}{\beta}} \notag \\
  &= C\left(\|\nabla
  w\|_{L^p(B_\rho)}\|w\|_{L^{1+\theta}(B_\rho)}
  ^{\frac{1-\beta}{\beta}}
  +\rho^{-\alpha}\|w\|_{L^{1+\theta}(B_\rho)}^{1/\beta}\right)
   \notag\\
  &= C\rho^{-\alpha}\left(\rho^{\alpha}
  E_D(\rho)^{\frac1p}
  E_A(\rho)^{\frac{1-\beta}{\beta(1+\theta)}}
  +E_A(\rho)^{\frac{1}{\beta (1+\theta)}}\right)  \notag\\
  & \le C\rho^{-\alpha}\left(
  E_T(\rho)^{\frac1p+\frac{1-\beta}{\beta(1+\theta)}}
  +E_A(1)^{\frac{1}{1+\theta}-\frac1p}
  E_A(\rho)^{\frac1p+\frac{1-\beta}{\beta(1+\theta)}}
  \right)  \notag\\
  & \le C\rho^{-\alpha} 
  E_T(\rho)^{\frac1p+\frac{1-\beta}{\beta(1+\theta)}}.
 \end{align}
 Here we have used 
 that $E_A(1) \le C \delta^{1+\theta}<C$ and $0<\theta<p-1$.
 Combining \eqref{eq:degE_T}--\eqref{eq:degdE_A} and \eqref{eq:degE_A},
 we obtain
 \begin{align*}
  E_T(\rho) & \le  C 
  \left(\frac{dE_T(\rho)}{d\rho}\right)^{(p-1)/p}
  \rho^{-\alpha \beta}
  E_T(\rho)^{\frac{\beta}{p}+\frac{1-\beta}{1+\theta}},
 \end{align*}
 that is,
 \begin{equation*}
   \frac{dE_T(\rho)}{d\rho} 
   \ge C \rho^{\tau-1}E_T(\rho)^{1-\gamma},
 \end{equation*}
 where
 \begin{align*}
  \gamma&=
	p^*(1-\beta)
   \left(\frac{1}{1+\theta}-\frac1p\right)
	=\frac{\frac{1}{1+\theta}-\frac1p}
	{N\left(\frac{1}{1+\theta}-\frac1p\right)+1}
	\in \left(0,\frac{1}{N+p^*}\right),\\
  \tau&=1+p^*\alpha\beta
	=Np^*\left(\frac{1}{1+\theta}-\frac1p\right)+p^*
	\in \left(p^*,N+p^*\right).
  \end{align*}
 Integrating it on $[\rho,1]$ 
 and using \eqref{eq:degtotal}, we have
 \begin{align*}
  E_T(\rho)^{\gamma} & \le E_T(1)^{\gamma}-
  C(1-\rho^{\tau})
  \le C(\rho^{\tau}-
  (1-M\delta^{(1+\theta) \gamma}))
 \end{align*}
 for some $M>0$,
 thus $E_T((1-M\delta^{(1+\theta) \gamma})^{1/\tau})=0$, i.e., 
 $\tilde{w}(y)=0$ for all $y \in 
 B(0,(1-M\delta^{(1+\theta) \gamma})^{1/\tau})$. 
 Scaling back to $x$, we conclude the assertion.
\end{proof}

As in Section 4, we obtain the corresponding 
Theorems \ref{thm:main'} and \ref{thm:main2'} below
to Theorems \ref{thm:main} and \ref{thm:main2}, respectively,
in the case when $a(x)$ is constant.  
For the proof of Theorem \ref{thm:main2'}, 
we have only to use the weak Harnack inequality
directly to $-\ep \Delta_p v+\Lambda_2 v^{\theta} \ge 0$ 
with $0<\theta<p-1$.
We note again that 
these have been already obtained by \cite{KV}. 

\begin{thm}
 \label{thm:main'}
 Assume $a(x)$ to be a positive constant. 
	Let $0<\theta<p-1$. 
 Then, there exist $L>0$ and $\ep_0 \in (0,\ep_a)$ 
 such that 
 for each $\ep \in (0,\ep_0)$
 the solution $u_{\ep}$ of \eqref{eq:p} satisfies
 $$u_{\ep}(x)=a(x)\quad \mbox{if}\ \dist(x,\partial \Om) \ge
	L{\ep}^{1/p}.$$
\end{thm}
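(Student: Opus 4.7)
The plan is to mirror the proof of Theorem~\ref{thm:main} step by step, replacing only the use of Proposition~\ref{prop:lcf} by the degenerate-case variant Proposition~\ref{prop:lcf}'. Since $a(x)$ is a positive constant, conditions (A1) and (A2) hold automatically (a constant is trivially $p$-harmonic), while (A3) fails, so one must check that the machinery from Sections~2--3 that I reuse does not secretly depend on (A3). The convergence result Proposition~\ref{lem:convergence} requires only (A1) and (A2), so I can invoke it directly; and when $\nabla a\equiv 0$ one has $\Phi_p(\nabla w,\nabla a)=\nabla_p w$, which makes Lemma~\ref{lem:w_x} unnecessary because the gradient bound that compensated $|\nabla a|$ in the energy estimate no longer appears.

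Concretely, I would fix $\delta\in(0,d)$ small enough that $M\delta^{(1+\theta)\gamma}<1$, where now $M$ and $\gamma$ are the constants of Proposition~\ref{prop:lcf}'. Using Proposition~\ref{lem:convergence} I obtain $K>0$ and $\varepsilon_*\in(0,\varepsilon_a)$ such that $a-\delta\le u_\varepsilon\le a$ on $\Omega_{K\varepsilon^{1/p}}$ for all $\varepsilon\in(0,\varepsilon_*)$. Setting $v=a-u_\varepsilon\ge 0$ and using that $a$ is constant, the equation in \eqref{eq:p} becomes $-\varepsilon\Delta_p v=-(a-v)^{q-1}f(v)$ in the distributional sense on $\Omega$. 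By (F3) and the lower bound $a-v\ge d$ on $\Omega_{K\varepsilon^{1/p}}$, shrinking $\delta$ if necessary yields a constant $\Lambda_1>0$ with $(a-s)^{q-1}f(s)\ge\Lambda_1 s^\theta$ for $s\in[0,\delta]$, so that
\begin{equation*}
-\varepsilon\Delta_p v+\Lambda_1 v^\theta\le 0\quad\text{in }\Omega_{K\varepsilon^{1/p}}.
\end{equation*}

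Next I choose $\varepsilon_0\in(0,\varepsilon_*)$ so that $\Omega_{(K+1)\varepsilon_0^{1/p}}\ne\emptyset$. For any $\varepsilon\in(0,\varepsilon_0)$ and any $x_0\in\Omega_{(K+1)\varepsilon^{1/p}}$, the ball $B(x_0,\varepsilon^{1/p})$ is contained in $\Omega_{K\varepsilon^{1/p}}$. Let $w$ be the unique solution of \eqref{eq:lcf} on $B=B(x_0,\varepsilon^{1/p})$ with $\Lambda=\Lambda_1$, furnished by Proposition~\ref{prop:existence} (which does not use (A3)). Since $v\le\delta=w$ on $\partial B$ and both functions satisfy the ordered differential inequality $-\varepsilon\,\mathrm{div}\,\Phi_p(\nabla\,\cdot\,,\nabla a)+\Lambda_1(\,\cdot\,)^\theta$, Proposition~\ref{prop:comparison} (with $g(s)=\Lambda_1|s|^{\theta-1}s$) yields $v\le w$ a.e.\ in $B$. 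Finally, Proposition~\ref{prop:lcf}' applies because $0<\theta<p-1$ and because our choice of $\delta$ ensures $\delta^{(1+\theta)\gamma}<M^{-1}$; it gives $w(x_0)=0$, hence $v(x_0)=0$, i.e.\ $u_\varepsilon(x_0)=a(x_0)$. Setting $L=K+1$ completes the proof.

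The only conceptual subtlety, rather than a genuine obstacle, is verifying that the pieces used do not silently require (A3); indeed the only places (A3) enters in Section~4 are through Lemma~\ref{lem:w_x} and the surface estimate \eqref{eq:E_T} of Proposition~\ref{prop:lcf}, both of which are replaced in the degenerate setting by the direct inequality $\int_{S_\rho}|\nabla_p w||w|\,ds\le\|\nabla w\|_{L^p(S_\rho)}^{p-1}\|w\|_{L^p(S_\rho)}$ built into Proposition~\ref{prop:lcf}'. Once that check is done, the argument is a verbatim transcription of the proof of Theorem~\ref{thm:main}.
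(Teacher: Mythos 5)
Your proposal is correct and is essentially the paper's own argument: the paper proves Theorem \ref{thm:main'} exactly ``as in Section 4,'' i.e.\ by rerunning the proof of Theorem \ref{thm:main} with Proposition \ref{prop:lcf}' in place of Proposition \ref{prop:lcf}, noting that with $\nabla a\equiv 0$ one has $\Phi_p(\nabla w,\nabla a)=\nabla_p w$ and that Propositions \ref{lem:convergence}, \ref{prop:comparison}, \ref{prop:existence} do not need (A3). Your additional check that (A3) enters only through Lemma \ref{lem:w_x} and the surface estimate \eqref{eq:E_T}, both replaced by the H\"older bound built into Proposition \ref{prop:lcf}', matches the paper's treatment.
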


\begin{thm}
 \label{thm:main2'}
 Assume $a(x)$ to be a positive constant.
 Let $\theta \ge p-1$.
 Then, for every $\ep \in (0,\ep_a)$, $u_\ep<a$ in $\Om$, 
and hence $\mathcal{O}_\ep =\emptyset$.
\end{thm}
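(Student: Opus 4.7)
The plan is to follow the strategy of Theorem \ref{thm:main2}, with the simplification that $a(x)$ constant makes $\nabla a\equiv 0$; in exchange, one needs a supersolution inequality of genuinely $p$-Laplacian type, and the hypothesis $\theta\ge p-1$ is exactly what is required to absorb the zero-order term into the structural hypotheses of Trudinger's weak Harnack inequality.

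First I would set $v=a-u_\ep$. The weak comparison principle (with the constant $a$ as supersolution) gives $0\le v\le a$, and $v\not\equiv 0$ because $v=a>0$ on $\partial\Om$; Lieberman's regularity gives $v\in C^{1,\alpha}(\ol{\Om})$. Since $\nabla a\equiv 0$, one has $\Delta_p(a-v)=-\Delta_p v$, so the equation in \eqref{eq:p} becomes $-\ep\Delta_p v=-(a-v)^{q-1}f(v)$ in $\Om$. Using (F3), the continuity of $f$, and boundedness of $v$, one gets $(a-v)^{q-1}f(v)\le \Lambda_2\, v^\theta$ for some $\Lambda_2>0$, whence
\[
-\ep\Delta_p v+\Lambda_2\, v^\theta\ge 0\quad\text{in }\Om.
\]

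The hypothesis $\theta\ge p-1$ now enters: since $v$ is bounded, $v^\theta\le\|v\|_{L^\infty(\Om)}^{\theta-(p-1)}\,v^{p-1}$, so $v$ is a nonnegative bounded weak supersolution of $-\ep\Delta_p v+C\, v^{p-1}\ge 0$ for some $C>0$. This fits the structural hypotheses of Trudinger's weak Harnack inequality \cite[Theorem~1.2]{Tr} (see also \cite[Theorem~7.1.2]{PS}), yielding
\[
\rho^{-N/\gamma}\|v\|_{L^\gamma(B(x_0,2\rho))}\le C\inf_{B(x_0,2\rho)}v
\]
for every $\ol{B(x_0,4\rho)}\subset\Om$ and admissible exponent $\gamma$.

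From here the conclusion is the same topological argument as in Theorem \ref{thm:main2}. If $v(x_0)=0$ at some interior point, the Harnack estimate forces $v\equiv 0$ on a ball around $x_0$, so $\{v=0\}$ is open in $\Om$; continuity makes it relatively closed; connectedness of $\Om$ then gives $v\equiv 0$ in $\Om$, contradicting $v=a>0$ on $\partial\Om$. Therefore $v>0$ in $\Om$, proving $u_\ep<a$ and $\mathcal{O}_\ep=\emptyset$. The main technical point to verify carefully is the applicability of Trudinger's estimate with a $v^{p-1}$ absorption for the bare $p$-Laplacian; unlike in Theorem \ref{thm:main2}, no truncation of $\Phi_p$ is needed because the principal part is already standard, but the price is that linear absorption is no longer enough---which is precisely why the threshold shifts from $\theta\ge 1$ to $\theta\ge p-1$.
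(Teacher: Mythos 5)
Your proposal is correct and follows essentially the same route as the paper: the paper's proof of Theorem \ref{thm:main2'} consists precisely in applying Trudinger's weak Harnack inequality directly to $-\ep\Delta_p v+\Lambda_2 v^{\theta}\ge 0$ for $v=a-u_\ep$, using $\theta\ge p-1$ to dominate $v^\theta$ by $\|v\|_{L^\infty(\Om)}^{\theta-(p-1)}v^{p-1}$ so the zero-order term fits the $p$-Laplacian structure conditions, and then concluding by the same open-closed connectedness argument as in Theorem \ref{thm:main2}. Your observation that no truncation of $\Phi_p$ is needed here, and that the threshold shifts from $\theta\ge 1$ to $\theta\ge p-1$ because the principal part is the bare $p$-Laplacian, matches the paper's reasoning exactly.
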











\begin{thebibliography}{99}
  \bibitem{ADS} S.N.\,Antontsev, J.I.\,D\'{i}az and S.\,Shmarev,
  \textit{Energy methods for free boundary problems.
	  Applications to nonlinear PDEs and fluid mechanics}, 
	  Progress in Nonlinear Differential Equations and their 
	  Applications, 48. Birkhauser Boston, Inc., Boston, MA, 2002.
  \bibitem{DH} J.\,Deuel and P.\,Hess,
	  A criterion for the existence of solutions of non-linear
	  elliptic boundary value problems,
	  \textit{Proc.\ Roy.\ Soc.\ Edinburgh Sect.\ A} 
	  \textbf{74} (1974/75), 49--54.
  \bibitem{D} J.I.\,D\'{i}az,
	  \textit{Nonlinear partial differential equations 
	  and free boundaries. Vol. I. Elliptic equations}, 
	  Research Notes in Mathematics, 106. Pitman 
	  (Advanced Publishing Program), Boston, MA, 1985.
  \bibitem{DS} J.I.\,D\'{i}az and J.E.\,Sa\'{a},
	  Existence et unicit\'{e} de solutions positives pour 
	  certaines \'{e}quations elliptiques quasilin\'{e}aires,
	  \textit{C.\ R.\ Acad.\ Sci.\ Paris S\'{e}r. I Math.} 
	  \textbf{305} (1987), 521--524.
  \bibitem{DV} J.I.\,D\'{i}az and L.\,V\'{e}ron,
	  Local vanishing properties of solutions of 
	  elliptic and parabolic quasilinear equations, 
	  \textit{Trans.\ Amer.\ Math.\ Soc.} \textbf{290}
	  (1985), 787--814. 
  \bibitem{Di} E.\,DiBenedetto,
	  $C^{1+\alpha}$ local regularity of weak solutions 
		of degenerate elliptic equations, 
		\textit{Nonlinear Anal.} \textbf{7} (1983), 827--850. 
  \bibitem{DG} Y.\,Du and Z.\,Guo,
	  Liouville type results and eventual flatness of 
      positive solutions for $p$-Laplacian equations,
	 \textit{Adv.\ Differential Equations} \textbf{7} (2002), 1479--1512. 
  \bibitem{GT} D.\,Gilbarg and N.S.\,Trudinger,
		\textit{Elliptic partial differential equations of second order. 
		Second edition.} 
		Grundlehren der Mathematischen Wissenschaften 
		[Fundamental Principles of Mathematical Sciences], 
		224. Springer-Verlag, Berlin, 1983.
  \bibitem{GS} J.\,Garc\'{i}a-Meli\'{a}n and J.\,Sabina de Lis,
      Stationary profiles of degenerate problems when a 
      parameter is large,
      \textit{Differential Integral Equations} \textbf{13}
      (2000), 1201--1232.
  \bibitem{GV} M.\,Guedda and L.\,V\'eron,
	  Bifurcation phenomena associated to the $p$-Laplace operator,
	  \textit{Trans.\ Amer.\ Math.\ Soc.} \textbf{310} (1988),
	  419--431.
  \bibitem{Gu} Z.\,Guo,
      Uniqueness and flat core of positive solutions for 
	  quasilinear elliptic eigenvalue problems in general 
      smooth domains,
      \textit{Math.\ Nachr.} \textbf{243} (2002), 43--74. 	  
  \bibitem{KV} S.\,Kamin and L.\,V\'{e}ron,
	  Flat core properties associated to the $p$-Laplace operator,
	  \textit{Proc.\ Amer.\ Math.\ Soc.} \textbf{118} (1993),
	  1079--1085.
  \bibitem{KS} S.\,Kichenassamy and J.\,Smoller,
	  On the existence of radial solutions of quasi-linear 
	  elliptic equations,
      \textit{Nonlinearity} \textbf{3} (1990), 677--694. 
  \bibitem{L} G.M.\,Lieberman,
      Boundary regularity for solutions of degenerate elliptic
      equations,
      \textit{Nonlinear Anal.} \textbf{12} (1988), 1203--1219.    
  \bibitem{PS} P.\,Pucci and J.\,Serrin,
      \textit{The maximum principle},
      Progress in Nonlinear Differential Equations and 
      their Applications, 73. Birkhauser Verlag, Basel, 2007.
  \bibitem{T1} S.\,Takeuchi,
	  Partial flat core properties associated to the 
	  $p$-Laplace operator,
	  \textit{Discrete Contin.\ Dyn.\ Syst.} 2007, Suppl.,
	  965--973.
  \bibitem{T2} S.\,Takeuchi,
	  Coincidence sets in semilinear elliptic problems
	  of logistic type,
	  \textit{Differential Integral Equations}
	  \textbf{20} (2007), 1075--1080.
  \bibitem{T3} S.\,Takeuchi,
	  Coincidence sets associated with second order ordinary 
		differential equations of logistic type,	
	  \textit{Differential Integral Equations}
	  \textbf{22} (2009), 587--600.
	\bibitem{To} P.\,Tolksdorf,
		Regularity for a more general class of quasilinear elliptic equations, 
		\textit{J. Differential Equations} \textbf{51} (1984), 126--150. 
  \bibitem{Tr} N.S.\,Trudinger,
      On Harnack type inequalities and their applications
      to quasilinear elliptic equations,
      \textit{Comm.\ Pure Appl.\ Math.} \textbf{20} (1967), 721--747.
 \end{thebibliography}
\end{document}